      \theoremstyle{plain}
      \newtheorem{theorem}{Theorem}[section]
      \newtheorem*{theorem*}{Theorem}
      \newtheorem{corollary}[theorem]{Corollary}
      \newtheorem{proposition}[theorem]{Proposition}
      \theoremstyle{definition}
	  \newtheorem{example}[theorem]{Example}
      \newtheorem{definition}[theorem]{Definition}
     \theoremstyle{remark}
      \newtheorem{remark}[theorem]{Remark}
\def\hocolim{\operatorname{\underline{hocolim}} }
\def\thocolim{\operatorname{hocolim}}
 \def\C{{\mathcal C}}
 \def\K{{\mathcal K}}
 \def\N{{\mathcal N}}
 \def\P{{\mathcal P}}
 \def\X{{\mathcal X}}
\newcommand\se{{\hspace{2 pt}\diagup\hspace{-4.8 pt} \searrow\hspace{5 pt}}}
\newcommand\co{{\hspace{2 pt}\searrow \hspace{3 pt}}}
\newcommand\we{\simeq\hspace {-11 pt}_{_{_{we}}}}
      \def\@setcopyright{}
      \def\serieslogo@{}
\begin{document}

\title [Homotopy colimits of diagrams over posets]{Homotopy colimits of diagrams over posets and variations on a theorem of Thomason}
   \author{Ximena Fern\'andez}
   \author{El\'ias Gabriel Minian}
   \address{Departamento  de Matem\'atica - IMAS\\
 FCEyN, Universidad de Buenos Aires. Buenos Aires, Argentina.}
\email{xfernand@dm.uba.ar ; gminian@dm.uba.ar}

   \begin{abstract}
   We use a classical result of McCord and reduction methods of finite spaces to prove a generalization of Thomason's theorem on homotopy colimits over posets. In particular this allows us to characterize the homotopy colimits of diagrams of simplicial complexes in terms of the Grothendieck construction on the diagrams of their face posets. We also derive analogues of well known results on homotopy colimits in the combinatorial setting, including a cofinality theorem and a generalization of Quillen's Theorem A for posets.
   \end{abstract}

\subjclass[2010]{55U10, 55P15, 06A06, 18A30, 18B35.}

\keywords{Homotopy colimits, Finite Topological Spaces, Posets, Grothendieck construction, Quillen's Theorem A}

   \maketitle

   \section{Introduction}

Let $\C$ be a small category and let  $X:\C\rightarrow \mathrm{CAT}$
be a functor to the category $\mathrm{CAT}$ of small categories. Recall that 
the \emph{Grothendieck construction} on $X$, which is usually denoted by $\C\int X$,
is the following category. The objects are the pairs $(c, x)$,
where $c$ is an object of $\C$ and $x$ is an
object of $X(c)$, and the morphisms $(\alpha,\beta): (c,x) \rightarrow (c',x')$ are given by morphisms
$\alpha: c \rightarrow c'$ in $\C$ and $\beta: X(\alpha)(x) \rightarrow x'$ in $X(c')$. Thomason's theorem \cite{Th} establishes  the existence of  a natural homotopy equivalence
\[\thocolim\hspace{5 pt}\N X\rightarrow \N(\C\int X)\] from 
the homotopy colimit of the nerve of $X$ to the nerve of the Grothendieck construction.

In this article we focus our attention on homotopy colimits of diagrams of spaces indexed by finite partially ordered sets. The main idea is to use the interaction between the combinatorics and the topology of finite topological spaces to investigate homotopy colimits of diagrams of polyhedra. If $P$ is a finite poset and $X:P\to  \P_{<\infty}$ takes values in the category of finite posets, the Grothendieck construction $P\int X$ is also a finite poset and, in fact, it is very simple to characterize. On the other hand any finite poset can be regarded as a finite topological space where the open subsets are its downsets (see for example \cite{Ba1,BM1,Ma, Ma2,Mc}). We handle the Grothendieck construction on a diagram of finite posets as a finite topological space and use a local-to-global theorem of McCord \cite[Thm 6]{Mc} to derive analogues of well known results on homotopy colimits in the combinatorial setting and to prove a generalization of Thomason's theorem. This generalization allows us to apply combinatorial methods to investigate homotopy colimits of diagrams of polyhedra (indexed by finite posets).  In \cite{Ba2} Barmak exhibited a very simple proof of Quillen's Theorem A for posets \cite{Qu} (or equivalently, McCord's theorem for finite topological spaces \cite{Mc}) using the \it non-Hausdorff mapping cylinder \rm $B_f$ of a poset map $f:X\to Y$. The non-Hausdorff mapping cylinder is a finite analogue of the classical mapping cylinder of a continuous function, and similarly to its classical version, $B_f$ is the \it non-Hausdorff homotopy colimit \rm (i.e. the Grothendieck construction) of the diagram of posets $\xymatrix@1{X\ar[r]^f & Y}$, indexed by the poset $\bf{1}$ of two elements $0<1$. 

In Section 2 we study non-Hausdorff homotopy colimits of $P$-diagrams from the finite space point of view.  We use \it reduction methods \rm  to investigate their weak homotopy types. Quillen's Theorem A for posets follows immediately from Proposition \ref{dbp} and Proposition \ref{ubp} below, by applying the results to the poset $\bf{1}$.

The main result of the article is the following generalization of Thomason's theorem in the context of finite posets.

\begin{theorem*}
Let $P$ be a finite poset. Let $K:P\to \mathcal{S}$ be a diagram of spaces and $X:P\to \P_{< \infty}$ be a diagram of finite posets. Let $\phi:K\to X$ be a diagram morphism (where $X$ is viewed as a diagram of finite topological spaces) such that $\phi_p:K_p\to X_p$ is a weak homotopy equivalence for every $p\in P$.  Then there exists a weak homotopy equivalence $$\hat\phi:\thocolim K\to \hocolim X$$
from the homotopy colimit of $K$ to the non-Hausdorff homotopy colimit of $X$ (viewed as a finite topological space).
\end{theorem*}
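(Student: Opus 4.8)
The plan is to factor the desired map as $\hat\phi=\psi\circ\thocolim\phi$, where $\thocolim\phi\colon\thocolim K\to\thocolim X$ is the map induced on ordinary (topological) homotopy colimits by the diagram morphism, and $\psi\colon\thocolim X\to\hocolim X=P\int X$ is a natural comparison from the topological homotopy colimit of the diagram of finite spaces to its non-Hausdorff homotopy colimit. Because each $\phi_p$ is a weak homotopy equivalence, homotopy invariance of the homotopy colimit shows that $\thocolim\phi$ is already a weak homotopy equivalence. Thus the entire hypothesis on $\phi$ is absorbed at this formal level, and the problem reduces to proving that $\psi$ is a weak homotopy equivalence --- a Thomason-type statement internal to finite spaces, which I would attack with McCord's local-to-global theorem \cite[Thm 6]{Mc}.

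To build $\psi$ I would use the Bousfield--Kan simplicial replacement of $X$, whose space of $n$-simplices is $\coprod_{p_0\le\cdots\le p_n}X_{p_0}$. A point of $\thocolim X$ is represented by a chain $p_0\le\cdots\le p_n$, barycentric coordinates $t\in\Delta^n$ and a point $y\in X_{p_0}$; the elements $(p_i,X(p_0\le p_i)(y))$ then form a chain in the finite poset $P\int X$, and I send the represented point to the minimum of the subchain singled out by the carrier of $t$ (the indices $i$ with $t_i\neq 0$). This is precisely McCord's carrier map applied to $P\int X$, and verifying that it descends to the coend and is continuous reduces to the standard observation that the preimage of a minimal open set is a union of open simplices, hence open.

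With $\psi$ in hand, McCord's theorem asserts that $\psi$ is a weak homotopy equivalence provided its restriction $\psi^{-1}(U_{(p,x)})\to U_{(p,x)}$ is one over every minimal open set $U_{(p,x)}=\{(q,y):q\le p,\ X(q\le p)(y)\le x\}$ of $P\int X$. Since $(p,x)$ is the maximum of the downset $U_{(p,x)}$, this finite space is contractible, and a map with weakly contractible target is a weak homotopy equivalence as soon as its source is weakly contractible. The whole theorem therefore collapses to the single assertion that each preimage $\psi^{-1}(U_{(p,x)})$ is weakly contractible. Here I would identify $\psi^{-1}(U_{(p,x)})$ with the homotopy colimit of the tautological diagram restricted to the downset $U_{(p,x)}$, and deduce contractibility from the presence of the maximum $(p,x)$, using the beat-point reductions of Section 2 (Propositions \ref{dbp} and \ref{ubp}) to collapse this indexing poset onto its top element.

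The hard part will be this last step. The preimage $\psi^{-1}(U_{(p,x)})$ is not a subcomplex of $\thocolim X$ but an open, carrier-defined region, so the work is to show rigorously that it agrees up to weak equivalence with the homotopy colimit over the downset $U_{(p,x)}$, and that the finite-space collapse of $U_{(p,x)}$ to its maximum lifts to an actual contraction of this region. Once this local contractibility is established, McCord's theorem gives that $\psi$, and hence the composite $\hat\phi$, is a weak homotopy equivalence; the remaining verifications (well-definedness, continuity, and naturality of $\hat\phi$) are routine.
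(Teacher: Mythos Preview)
Your factoring $\hat\phi=\psi\circ\thocolim\phi$ is legitimate and is a genuine reorganization of the paper's argument: the paper does \emph{not} pass through $\thocolim X$ but defines $\hat\phi$ directly on $\coprod_p K_p\times\K(F_p)/\!\sim$ by $\hat\phi(\alpha,\beta)=\phi_{\mu_p(\beta)}(f_{p\mu_p(\beta)}(\alpha))$. More importantly, the paper applies McCord's theorem to the \emph{coarser} basis-like cover $\{\hocolim X|_{U_p}\}_{p\in P}$, indexed only by $P$. Over each such open the preimage is $\coprod_{q\le p}K_q\times\mu_q^{-1}(U_p^{F_q})/\!\sim$, and the paper exhibits an explicit two-step linear deformation retraction of this onto $K_p\times\{p\}$ (first slide $\beta$ into $\K(U_p)$, then cone to $p$); the local square is completed by $\phi_p$ on one side and Corollary~\ref{maximum} on the other. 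Your choice of the finer cover $\{U_{(p,x)}\}$ forces every preimage to be \emph{contractible}, which is a stronger demand and is precisely where your sketch becomes thin.

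The gap is in the local step. Propositions~\ref{ubp} and~\ref{dbp} are statements about the \emph{finite poset} $\hocolim$ of a diagram of finite posets; they do not, on their own, say anything about the open subset $\psi^{-1}(U_{(p,x)})$ of the topological space $\thocolim X$. Your proposed bridge---identifying $\psi^{-1}(U_{(p,x)})$ with the topological hocolim of a ``tautological diagram over $U_{(p,x)}$'' and then lifting the beat-point collapse of $U_{(p,x)}$ to a contraction---is essentially a special case of the very theorem you are proving (Thomason over a poset with maximum), so invoking it without an independent argument is circular. What actually makes $\psi^{-1}(U_{(p,x)})$ contractible is the same concrete linear homotopy the paper writes down (retracting onto $U_x^{X_p}\times\{p\}$, which has maximum $x$), and once you supply that, you have reproduced the paper's proof with an unnecessarily fine cover. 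The moral: the paper's coarser $P$-indexed cover is the right simplification, and the local contractibility is obtained by an explicit geometric retraction, not by the combinatorial reductions of Section~\ref{methods}.
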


As an immediate consequence of this result we derive Thomason's theorem in the context of posets, and also a kind of converse of Thomason's theorem, which relates the homotopy colimit of a diagram of simplicial complexes with the non-Hausdorff homotopy colimit of the diagram of their face posets. In combination with the reduction methods of Section 2, this allows us to simplify the computation of homotopy colimits of diagrams of spaces. 

It is well known that for any simplicial set $T$, there is a \it natural \rm homotopy equivalence 
$sd\ T \to T$ from the (geometric realization of the) barycentric subdivision of $T$ to $T$ (see for example \cite[Thm 12.2.5]{Ma3}). By Bousfield-Kan's homotopy lemma \cite{BK}, this implies that the homotopy colimit of a diagram of simplicial sets is homotopy equivalent to the homotopy colimit of the diagram of their barycentric subdivisions. On the other hand, any ordered simplicial complex $K$ (i.e. a simplicial complex together with a partial ordering of its vertices that restricts to a total ordering on the vertices of each simplex) can be seen as a simplicial set $K_s$. Moreover, the simplicial set associated to its (geometric) barycentric subdivision $(K')_s$ is naturally isomorphic to $sd\ K_s$, the subdivision of the simplicial set $K_s$ (see \cite[Thm 12.2.2]{Ma3}). This proves that the homotopy colimit of a diagram of ordered simplicial complexes (and ordered simplicial maps) is homotopy equivalent to the homotopy colimit of the diagram of their barycentric subdivisions. However, in a general geometric situation, one has to deal with diagrams of unordered simplicial complexes, and in the unordered context there is no natural homotopy equivalence between $K$ and its barycentric subdivision $K'$ (although their underlying topological spaces are equal). Our methods turn out to be appropriate to circumvent this problem. As a corollary of our main theorem, we prove invariance of homotopy type under barycentric subdivision for homotopy colimits in the (general) unordered setting.

The paper concludes with some new results on homotopy colimits of diagrams of polyhedra, which are obtained by applying the main theorem together with the combinatorial methods introduced in Section 2.

\section{The Grothendieck construction on posets and reduction methods}\label{methods}

Given a finite poset $X$, we denote by $\K(X)$ its classifying space. $\K(X)$ is also called the order complex of $X$ and it is the simplicial complex whose simplices are the non-empty chains of $X$. A finite poset can be seen as a finite topological space whose open subsets are the downsets (see for example \cite{Ba1,BM1,Ma, Ma2, Ma3, Mc}). The topology of the finite space $X$ is related to the topology of the classifying space $\K(X)$. This was studied by McCord in \cite{Mc}. Concretely, there is a natural weak homotopy equivalence $\mu:\K(X)\to X$ (i.e. a continuous map which induces isomorphisms in all homotopy groups). $\mu$ is called \it the McCord map \rm and it is defined as follows. Given $\alpha \in \K(X)$, write $\alpha=\sum_{i=1}^r t_i x_i$ with $\sum_{i=1}^r t_i=1$ and $t_i>0$, where $x_1<x_2<\cdots<x_r$ is 
a chain of $X$, and define $\mu(\alpha)=x_1$. 

Throughout this paper we will handle finite posets as finite topological spaces and use the weak equivalence $\K(X)\to X$. A function $f:X\to Y$ between finite posets (=finite spaces) is a poset map (i.e. it is order preserving) if and only if it is continuous. Note that a poset $X$ is weakly contractible (= homotopically trivial) if and only if $\K(X)$ is a contractible polyhedron. More generally, two finite posets $X$ and $Y$ are weak equivalent (denoted by $X\we Y$) if and only if $\K(X)$ and $\K(Y)$ are homotopy equivalent, and $f:X\to Y$  is a weak equivalence if and only if its realization is a homotopy equivalence. A finite poset is called a \it finite model \rm of a CW-complex $K$ if its classifying space $\K(X)$ is homotopy equivalent to $K$.

Given a finite simplicial complex $K$, its face poset will be denoted by $\X(K)$. This is the poset of simplices of $K$ ordered by inclusion. A simplicial map $f:K\to L$ induces a poset map $\X(f):\X(K)\to\X(L)$. McCord defined in \cite{Mc} a weak equivalence $\tilde\nu:K\to \X(K)$ using the McCord map of above and the identification of $K$ with its barycentric subdivision $K'$. This weak equivalence is however not natural (only up to homotopy). Since we need naturality to apply Theorem \ref{main} below, we will use the following variation of the McCord map $\tilde\nu$, which was introduced by Barmak in \cite{Ba1}. Given a simplicial complex $K$, we consider its face poset $\X(K)$ with the opposite order, denoted by $\X(K)^{op}$, and define a map $\nu:K\to \X(K)^{op}$ by $\nu(x)=\sigma$ if $x\in\overset{\circ}{\sigma}$. Here $\overset{\circ}{\sigma}$ denotes the interior of $\sigma$. By \cite[Thm 11.3.2]{Ba1} $\nu$ is a natural weak equivalence.


 For any $x \in X$,  let
$U_x\subseteq X$ be the subposet of elements which are smaller than or equal to $x$ and let $\hat U_x = U_x\smallsetminus\{x\}$. Analogously, we denote by
$F_x$  the subposet of elements of $X$ which are greater than or equal to $x$ and let $\hat F_x =F_x\smallsetminus\{x\}$. The open subsets $\{U_x\}_{x\in X}$ form a basis for the topology of $X$. When a point $x$ belongs to different posets $X,Y$, we write $U_x^X,U_x^Y,F_x^X,F_x^Y$ to distinguish whether the subposets are considered in $X$ or in $Y$. For $x,y\in X$, we write $x\prec y$ if $x$ is covered by $y$, i.e. if $x<y$ and there is no $z$ such that $x<z<y$. A linear extension of a finite poset $X$ is a total ordering $x_1,\ldots,x_n$ of its elements such that if $x_i\leq x_j$ in $X$ then $i\leq j$.

In \cite{Ba1,BM1,BM2} various reduction methods were introduced. A reduction method allows us to study and handle the homotopy type of a polyhedron by combinatorial moves on their finite models. We describe here some of these methods, which will be used in the rest of this article. For a comprehensive exposition the reader may consult \cite{Ba1,BM1,BM2}. The first reduction method was introduced by Stong \cite{St} (see also \cite{Ma,Ma2}). Given a finite poset $X$, an element $x\in X$ is called an \it up beat point \rm if $\hat F_x$ has a minimum, i.e. there is a unique element $y\in X$ such that $x\prec y$. Analogously $x$ is called a \it down beat point \rm if $\hat U_x$ has maximum (there is a unique $y$ such that $y\prec x$). In both cases we say that $x$ is \it dominated \rm by $y$. If $x$ is a beat point (up or down), $X\smallsetminus\{x\}\subset X$ is a strong deformation retract (as finite topological spaces, and therefore at the level of classifying spaces). A finite poset $X$ is contractible (=dismantlable) if and only if one can remove beat points from $X$, one by one, to obtain a one-point space (singleton). If $X$ is contractible then it is weakly contractible, but the converse does not hold.
The notion of \it collapse \rm in the context of posets was introduced by Barmak and Minian in \cite{BM1} and it corresponds to Whitehead's classical notion of simplicial collapse. A point $x\in X$ is a \it down weak point \rm if $\hat U_x$ is contractible (i.e. dismantlable), and it is an \it up weak point \rm if  $\hat F_x$ is contractible. An \textit{elementary collapse} is the deletion of an up or
 down weak point. The inverse operation is called an \textit{elementary
 expansion}.  We say that $X$ \textit{collapses} to $Y$ (or
$Y$ expands to $X$), and denote $X \co Y$,
 if there is a sequence of elementary collapses which
starts in $X$ and ends in $Y$. A poset $X$ is said to be \it collapsible \rm if it collapses to
 a point. Since any beat point is in particular a weak point, if $X$ is a contractible poset then it is collapsible. In \cite{Ba1,BM1} there are various examples of collapsible posets which are not contractible. Finally, we say that $X$ is \textit{simply equivalent} to $Y$ if there exists a sequence of collapses
and expansions that starts in $X$ and ends in $Y$. This is denoted by 
 $X\se Y$. In \cite{BM1} it is proved that $X\se Y$ if and only if $\K(X)$ and $\K(Y)$ are simply equivalent polyhedra (which, as customary, is denoted by $\K(X)\se \K(Y)$). In particular if $X\se Y$ then they are weak equivalent (and collapsible posets are homotopically trivial). Moreover, if $X\co Y$ then $\K(X)\co \K(Y)$. It can be shown that if $\hat U_x$ or $\hat F_x$ is a collapsible poset then $\K(X)\co \K(X\smallsetminus\{x\})$ (see \cite{BM2}).

More generally, we say that a point $x\in X$  is a \it $\gamma$-point \rm if 
$\hat U_x$ or $\hat F_x$ is homotopically trivial. It is proved in \cite{BM2} that in that case, $X \se X\smallsetminus \{x\}$. Therefore, if a finite poset $X$ can be reduced to a point by removing $\gamma$-points, one by one, then it is homotopically trivial (i.e. its classifying space $\K(X)$ is contractible).



Working with diagrams over finite posets allows us to apply reduction methods to study their homotopy colimits. We use reduction methods to derive old and new results on homotopy colimits.  We refer the reader to \cite{BK} and \cite{Vo} for the basic definitions and results on homotopy colimits of spaces, and to \cite{WZZ} for applications of homotopy colimits to combinatorial problems.

Let $P$ be a finite poset, viewed as a small category with a unique arrow 
 $p\rightarrow q$ for each $p,q\in P$ such that $p\leq q$, and let $X$ be a \textit{$P$-diagram of finite posets}, i.e. a
functor from $P$ to the category $\P_{< \infty}$ of finite
posets. In this case the Grothendieck construction on $X$ can be described as follows.

\begin{definition}[The non-Hausdorff homotopy colimit of finite posets] Let $X:P\rightarrow \P_{< \infty}$ be a functor.
The \emph{non-Hausdorff homotopy colimit of $X$}, denoted by $\hocolim X$, 
is the following poset. The underlying set is the disjoint union $\coprod_{p\in P} X_p$. 
We keep the given ordering within $X_p$ for all $p \in P$,
 and for every  $x\in X_p$ and $y\in X_q$ such that $p\leq q$, we set $x\leq y$ in $\hocolim X$ if $f_{pq}(x)\leq y$ in $X_q$.  Here $X_p=X(p)$ for each $p\in P$ 
 and $f_{pq}=X(p\rightarrow q)$ for each $p\leq q$ in $P$.
\end{definition}

By Thomason's theorem we have a homotopy equivalence
\[\thocolim\hspace{5 pt} \K X \simeq \K(\hocolim X).\]

In the context of finite posets, Thomason's theorem can be deduced from a more general result. This will be proved in the next section.

\begin{remark}
At this point it is worth noting the difference between $\hocolim X$ and $\thocolim X$ for a given diagram $X:P\to \P_{< \infty}$. The first one is the Grothendieck construction on $X$ and it is a finite poset (which can be viewed as a finite topological space). The second one is the classical construction of homotopy colimit of a diagram of topological spaces (applied, in this case, to a diagram of finite topological spaces) and it is not a finite space. However, as an immediate consequence of our main result of next section, we will see that they are weak equivalent spaces (when we view $\hocolim X$ as a finite topological space).
\end{remark}

\begin{example} \label{mapcyl} Any map $f:X_0\rightarrow X_1$ between finite posets can be viewed as a diagram $X:\bf{1}\rightarrow  \P_{< \infty}$ where $\bf{1}$ is the poset
of two elements $0<1$. Similarly as in the topological
context, we have  $\hocolim X= B_f$, 
the non-Hausdorff mapping cylinder of $f$, introduced in \cite{Ba1}.
\end{example}

 It is a well known fact that the mapping cylinder of a continuous function $f:W\to Z$ is homotopy equivalent to $Z$. Similarly, if $f:X\to Y$ is a map of posets, the non-Hausdorff mapping cylinder $B_f$ collapses to $Y$. This result can be viewed as a particular case of Corollary \ref{maximum} below: if the indexing poset $P$ has maximum $p$, the homotopy colimit of any $P$-diagram collapses to $X_p$.
If $f:X\to Y$ is a poset map such that $f^{-1}(U_y)$ is weakly contractible for every $y\in Y$, then $f$ is a weak equivalence (i.e. it induces a homotopy equivalence between the classifying spaces). This is Quillen's Theorem A for posets \cite{Qu} and McCord's theorem for finite topological spaces \cite{Mc}. In Proposition \ref{dbp} we generalize this result for homotopy colimits of $P$-diagrams.
Quillen's Theorem A for posets follows immediately from Propositions \ref{dbp} and \ref{ubp} by applying the results to the poset $\bf{1}$.

Given a poset map $\phi:P\to Q$ and a $Q$-diagram $X$, we denote by $\phi^*X$ the $P$-diagram obtained by pulling back $X$ along $\phi$. Concretely, $\phi^*X=X\phi$. There is a canonical map $\hocolim \phi^*X \to \hocolim X$ induced by the identities $(\phi^*X)_p=X_{\phi(p)}$. If $i:Q'\to Q$ is a subposet and
$X$ is a $Q$-diagram, the restriction $i^*X$ is denoted by $X_{|_{Q'}}$. Note that in this case $\hocolim X_{|_{Q'}}$ is a subposet of $\hocolim X$.

\begin{proposition} \label{ubp} Let $X:P\rightarrow \P_{<\infty}$
be a $P$-diagram of finite posets. If $p\in P$ is an up beat point, then 
$\hocolim X\co \hocolim X_{|_{P\smallsetminus \{p\}}}$. In particular, they are weak equivalent.
\end{proposition}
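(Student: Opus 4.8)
The plan is to realize this collapse by deleting the points of the fiber $X_p\subseteq \hocolim X$ one at a time, each one as an \emph{up weak point} at the appropriate stage. First I would unpack the hypothesis. Saying that $p$ is an up beat point of $P$ means that $\hat F_p$ has a minimum $q$; equivalently, $\{p'\in P : p'>p\}=F_q^P$, so every $p'>p$ satisfies $p'\ge q$, and in particular $q\neq p$. By functoriality of $X$ this yields $f_{pp'}=f_{qp'}\circ f_{pq}$ for every such $p'$. Note also that $\hocolim X\smallsetminus \hocolim X_{|_{P\smallsetminus\{p\}}}$ is precisely the copy of $X_p$ sitting inside $\hocolim X$.

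Next I would compute the up-set of a point $x\in X_p$ in $\hocolim X$. Directly from the definition of the non-Hausdorff homotopy colimit,
\[
\hat F_x=\{x'\in X_p : x'>x\}\ \cup\ \{y\in X_{p'} : p'>p,\ f_{pp'}(x)\le y\}.
\]
Setting $z=f_{pq}(x)\in X_q$ and using $\{p':p'>p\}=F_q^P$ together with $f_{pp'}(x)=f_{qp'}(z)$, the second set is exactly the up-set $F_z$ of $z$ computed in $\hocolim X$ (every element above $z$ lies in some $X_{p'}$ with $p'\ge q$, and conversely). Hence $\hat F_x=\{x'\in X_p : x'>x\}\cup F_z$, where $F_z$ has minimum $z$ and therefore is contractible; moreover, since $q\neq p$ and every member of $F_z$ lies in some $X_{p'}$ with $p'\ge q\neq p$, the subposet $F_z$ is disjoint from $X_p$.

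Now I would remove the elements of $X_p$ following a reverse linear extension of $X_p$, i.e.\ deleting maximal remaining elements first. When $x$ is about to be deleted, all elements of $X_p$ strictly greater than $x$ have already been removed, while $F_z$ remains untouched (being disjoint from $X_p$). Thus in the current subposet $Y$ one has $\hat F_x^Y=F_z$, which is contractible, so $x$ is an up weak point of $Y$ and its deletion is an elementary collapse. Iterating over all of $X_p$ exhibits the chain of elementary collapses $\hocolim X\co \hocolim X_{|_{P\smallsetminus\{p\}}}$. The final assertion follows at once, since $X\co Y$ implies $\K(X)\co \K(Y)$ and in particular $X\we Y$.

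The only delicate point—the main thing to check carefully—is the bookkeeping that guarantees $\hat F_x$ collapses \emph{exactly} to $F_z$ at the moment $x$ is deleted. This rests on two independent facts: that the reverse linear extension has already eliminated every element of $X_p$ above $x$, and that $F_z$ never meets $X_p$ because $q\neq p$. Once these are pinned down, the verification that $\hat F_x=F_z$ has a minimum is immediate, and the rest of the argument is a direct reading of the order relation of $\hocolim X$.
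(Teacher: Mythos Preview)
Your proof is correct and follows essentially the same route as the paper: choose a linear extension of $X_p^{op}$, delete the points of $X_p$ one by one, and observe that at the moment $x_i$ is removed its punctured up-set in the intermediate poset equals $F_{f_{pq}(x_i)}$. The only cosmetic difference is that the paper records this $\hat F_{x_i}$ as having \emph{minimum} $f_{pq}(x_i)$, so each $x_i$ is in fact an up \emph{beat} point (not merely an up weak point); you already note that $F_z$ has minimum $z$, so you could sharpen your conclusion accordingly, and this stronger statement is what the paper later invokes in Remark~\ref{sonbeat}.
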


\begin{proof} Let $x_0, x_1, \cdots, x_n$ be a linear extension of $X_p^ {op}$, the opposite poset of $X_p$. 
Let $Y_0=\hocolim X$ and for each $0\leq i \leq n$ define inductively $Y_{i+1}=Y_{i}\smallsetminus \{x_i\}$. Let $q$ be the minimum element of $\hat F_p^P$. Note that
$\hat F_{x_i}^{Y_i}=F_{f_{pq}(x_i)}^{Y_{i}}$. This implies that  $x_i$ is an up beat point 
of $Y_i$ for all $1\leq i\leq n$ and therefore
$$\hocolim X= Y_0\co Y_1 \co \cdots \co Y_{n+1}= (\hocolim X) \smallsetminus X_p
=  \hocolim X_{|P\smallsetminus \{p\}}.$$
\end{proof}

\begin{corollary} \label{maximum} Let $X:P\rightarrow \P_{<\infty}$
 be $P$-
diagram of finite posets.
If $P$ has a maximum element $p$, then $\hocolim(X)\co X_p$. In particular, they are weak equivalent.
\end{corollary}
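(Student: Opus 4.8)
The plan is to deduce this directly from Proposition \ref{ubp}, peeling off the elements of $P$ below its maximum one at a time, each as an up beat point of the ambient poset. First I would record the elementary observation that drives the whole argument: if $P$ has maximum $p$ and $q$ is any \emph{maximal} element of the subposet $P\smallsetminus\{p\}$, then $q$ is an up beat point of $P$. Indeed, since $p$ is the maximum we have $q<p$, and any $z$ with $q<z$ must equal $p$ (otherwise $z\in P\smallsetminus\{p\}$ would be strictly above $q$, contradicting its maximality there); hence $\hat F_q^P=\{p\}$, which trivially has a minimum. So $q\prec p$ and $q$ is an up beat point in the sense of Section \ref{methods}.

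Then I would argue by induction on the number of elements of $P$. In the base case $P=\{p\}$ is a singleton, so $\hocolim X=X_p$ and there is nothing to prove. For the inductive step I would choose a maximal element $q$ of $P\smallsetminus\{p\}$; by the observation above $q$ is an up beat point of $P$, so Proposition \ref{ubp} yields
$$\hocolim X\co \hocolim X_{|P\smallsetminus\{q\}}.$$
Since $q\neq p$, the subposet $P\smallsetminus\{q\}$ still has $p$ as its maximum and has strictly fewer elements, so the inductive hypothesis applied to the restricted diagram $X_{|P\smallsetminus\{q\}}$ (whose value at $p$ is again $X_p$) gives $\hocolim X_{|P\smallsetminus\{q\}}\co X_p$. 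Composing the two collapses, which is legitimate since collapsibility is transitive, produces $\hocolim X\co X_p$, as claimed. The ``in particular'' clause is then immediate: as recalled in Section \ref{methods}, a collapse $\hocolim X\co X_p$ induces a collapse $\K(\hocolim X)\co \K(X_p)$ of classifying spaces and hence a (simple) homotopy equivalence, so the two spaces are weak equivalent.

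As for difficulty, there is essentially no obstacle: the substantive content is entirely absorbed into Proposition \ref{ubp}, and the only thing to check by hand is the up beat point claim. Its single delicate point is that one must genuinely use that $p$ is the \emph{global} maximum of $P$, so that the only element lying above a maximal $q$ of $P\smallsetminus\{p\}$ is $p$ itself; without this the cover of $q$ need not be unique. An equivalent and perhaps more transparent route, should a fully explicit iteration be preferred over induction, is to fix a linear extension $q_1,\dots,q_n,p$ of $P$ with the maximum $p$ appearing last and to delete the $q_i$ in the reverse order $q_n,\dots,q_1$, checking that each $q_i$ is an up beat point of the poset remaining at its stage.
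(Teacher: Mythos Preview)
Your proof is correct and follows essentially the same approach as the paper: both arguments remove the non-maximal elements of $P$ one at a time as up beat points (each having $\hat F$ equal to the singleton $\{p\}$) and apply Proposition \ref{ubp} at every step. The paper phrases the iteration via a linear extension $p=p_0,p_1,\dots,p_n$ of $P^{op}$ rather than by induction, which is precisely the alternative route you mention at the end of your proposal.
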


\begin{proof}Let $p=p_0, p_1, \cdots, p_n$ be a linear extension of $P^{op}$. Since 
$P$ has a maximum element, 
there is a sequence of collapses $$P\co P\smallsetminus\{p_1\}\co
P\smallsetminus \{p_1, p_2\}\cdots \co P\smallsetminus \{p_1,
p_2, \cdots, p_n\}=\{p\},$$ where $p_i$ is an up beat point of $P\smallsetminus \{p_1, p_2,
\cdots, p_{i-1}\}$.
By applying recursively  Proposition \ref{ubp}, we have 
\[\hocolim X \co (\hocolim X) \smallsetminus X_{p_1}
\co \cdots \co (\hocolim X)\smallsetminus
\bigcup_{i=1}^ n X_{p_i}= X_p\]
\end{proof}

From this result and McCord's theorem \cite[Thm 6]{Mc} we deduce the following analogue of 
Bousfield-Kan's homotopy lemma (cf. \cite{BK, WZZ}). Note that it can also be deduced from the original homotopy lemma for diagrams of spaces and Thomason's theorem.

\begin{corollary}[Homotopy Lemma]\label{homotopy} Let $P$ be a finite
 poset, let $X,Y:P\rightarrow \P_{<\infty}$ be
$P$-diagrams of finite posets and $\alpha:X\rightarrow Y$ a 
morphism of diagrams. 
If $\alpha_p:X_p\rightarrow Y_p$ is a weak equivalence for every 
$p\in P$, then $\alpha$ induces a weak equivalence 
\[\hocolim X \we \hocolim Y .\]
\end{corollary}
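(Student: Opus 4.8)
The plan is to reduce the statement to the classical homotopy lemma for diagrams of topological spaces by passing to order complexes, rather than to apply McCord's theorem directly to the induced map of finite spaces. First I would record that $\alpha$ induces a poset map $\hat\alpha:\hocolim X\to\hocolim Y$ sending $x\in X_p$ to $\alpha_p(x)\in Y_p$; that this is order preserving is exactly the naturality of $\alpha$ combined with the description of the order in the non-Hausdorff homotopy colimit. Since a map of finite posets is a weak equivalence if and only if its realization $\K(\hat\alpha)$ is a homotopy equivalence, it suffices to prove that $\K(\hat\alpha)$ is a homotopy equivalence.

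Next, applying the order-complex functor levelwise turns $X$ and $Y$ into $P$-diagrams of polyhedra $\K X$ and $\K Y$, and turns $\alpha$ into a morphism $\K\alpha:\K X\to\K Y$. For each $p$ the map $\K(\alpha_p):\K(X_p)\to\K(Y_p)$ is a homotopy equivalence, precisely because $\alpha_p$ is a weak equivalence of finite posets. By the homotopy lemma for diagrams of spaces (Bousfield--Kan), the induced map $\thocolim\K\alpha:\thocolim\K X\to\thocolim\K Y$ is then a homotopy equivalence. Finally I would invoke the naturality of Thomason's equivalence $\thocolim\K(-)\simeq\K(\hocolim(-))$: for the morphism $\alpha$ this produces a commuting square whose horizontal maps are the Thomason equivalences for $X$ and for $Y$, whose left vertical map is $\thocolim\K\alpha$, and whose right vertical map is $\K(\hat\alpha)$. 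Three of the four maps are homotopy equivalences, so $\K(\hat\alpha)$ is one as well, and therefore $\hat\alpha:\hocolim X\we\hocolim Y$ is a weak equivalence.

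The step I expect to be the main obstacle --- and the reason I route through order complexes rather than appealing to McCord's theorem directly --- is the control of fibers. If one tried to apply McCord's local-to-global theorem to $\hat\alpha$ using the minimal basis $\{U_y\}$, one would need each $\hat\alpha^{-1}(U_y)$ to be weakly contractible. For $y\in Y_q$ this preimage is the non-Hausdorff homotopy colimit, over $U_q^P=\{p\le q\}$, of the pulled-back subdiagram $p\mapsto (f_{pq})^{-1}(\alpha_q^{-1}(U_y^{Y_q}))$, which by Corollary \ref{maximum} collapses to the fiber at the maximum $q$, namely $\alpha_q^{-1}(U_y^{Y_q})$. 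However, a levelwise weak equivalence $\alpha_q$ need not carry such principal down-sets to weakly contractible preimages, so this fiber condition can genuinely fail and McCord's theorem does not apply verbatim. Passing to order complexes sidesteps the difficulty entirely, since the classical homotopy lemma requires only that the levelwise maps be equivalences, with no fiber hypothesis. The remaining care is then purely in checking that the Thomason square commutes (up to homotopy) for this specific morphism $\alpha$ and that its left vertical map is indeed $\thocolim\K\alpha$, which is guaranteed by the naturality asserted in Thomason's theorem.
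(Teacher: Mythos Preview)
Your argument is correct, and in fact the paper itself remarks, immediately before the proof, that the result ``can also be deduced from the original homotopy lemma for diagrams of spaces and Thomason's theorem'' --- which is exactly your route. So you have found the alternative the authors had in mind.

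The paper's own proof, however, is different and stays entirely within the finite-space setting: it \emph{does} apply McCord's theorem directly, just not with the minimal basis you analyzed. The key observation is that one may use the coarser basis-like open cover $\{\hocolim Y_{|_{U_p}}\}_{p\in P}$ of $\hocolim Y$. For this cover the preimage under $\hat\alpha$ of $\hocolim Y_{|_{U_p}}$ is simply $\hocolim X_{|_{U_p}}$, and by Corollary~\ref{maximum} both collapse to $X_p$ and $Y_p$ respectively; since $\alpha_p$ is a weak equivalence, the restriction of $\hat\alpha$ is one as well, and McCord's theorem applies. Your computation showing that $\hat\alpha^{-1}(U_y)$ collapses to $\alpha_q^{-1}(U_y^{Y_q})$, which need not be weakly contractible, is perfectly right --- the point is only that the obstruction disappears once one coarsens the cover to be indexed by $P$ rather than by $\hocolim Y$.

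As for what each approach buys: yours is short given the external machinery, but it imports both the Bousfield--Kan homotopy lemma and the naturality of Thomason's equivalence, whereas the paper's goal is precisely to give self-contained combinatorial arguments from which Thomason's theorem for posets is later \emph{derived} (Corollary~\ref{thom}). The paper's proof is thus logically prior within the article and illustrates the reduction methods of Section~\ref{methods}; your proof is a legitimate shortcut if one is willing to quote Thomason's original result from~\cite{Th}.
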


\begin{proof}
For any $p\in P$ consider $\hocolim X_{|_{U_p}}$ the homotopy colimit of the diagram restricted to $U_p$. Since $p$ is the maximum of $U_p$, by Proposition \ref{maximum} $\hocolim X_{|_{U_p}} \co X_p$ and therefore $\alpha$ induces a weak equivalence
$\hocolim X_{|_{U_p}} \we \hocolim Y_{|_{U_p}}$. Now the results follows from \cite[Thm 6]{Mc} applied to the basis-like open cover $\{\hocolim Y_{|_{U_p}}\}_{p\in P}$ of  $\hocolim Y$.
\end{proof}

\begin{remark}\label{sonbeat}
Note that all the collapses in Proposition \ref{ubp} and Corollary \ref{maximum} are strong collapses, in the sense that all the points removed are beat points (not just weak points). This implies that if the finite space $X_p$ in Corollary \ref{maximum} is contractible (i.e. it is a dismantlable poset) then so is $\hocolim X$.
\end{remark}

\begin{proposition} \label{dbp} Let $X:P\rightarrow \P_{<\infty}$ be 
a $P$-
diagram of finite posets.
If $p$ is a down beat point of $P$ dominated by an element $q$ 
and $f_{qp}^{-1}(U_x)$ is contractible for every $x \in X_p$, then $\hocolim(X)\co
\hocolim(X_{|P\smallsetminus \{p\}})$. In particular they are weak equivalent.
\end{proposition}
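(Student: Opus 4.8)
The plan is to mimic the strategy of Proposition \ref{ubp}, removing the points of $X_p$ from $\hocolim X$ one at a time, but now I expect each removed point to be a \emph{down weak point} rather than a beat point. Fix a linear extension $x_1,x_2,\ldots,x_n$ of $X_p$ (so that $x_i\le x_j$ in $X_p$ forces $i\le j$), set $Y_0=\hocolim X$, and define inductively $Y_i=Y_{i-1}\smallsetminus\{x_i\}$, so that $Y_n=(\hocolim X)\smallsetminus X_p=\hocolim X_{|P\smallsetminus\{p\}}$. The goal is to show that each $x_i$ is a down weak point of $Y_{i-1}$, so that every step $Y_{i-1}\co Y_i$ is an elementary collapse.

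First I would identify $\hat U_{x_i}^{Y_{i-1}}$. Since $p$ is a down beat point of $P$ dominated by $q$, the only indices strictly below $p$ in $P$ are those $p'\le q$. Hence in $\hocolim X$ the subposet $\hat U_{x_i}$ splits into the part inside $X_p$, namely $\{z\in X_p:z<x_i\}$, together with the part $W_{x_i}=\{y\in X_{p'}:p'\le q,\ f_{p'p}(y)\le x_i\}$ coming from below $p$. The choice of linear extension is exactly what is needed here: every $z\in X_p$ with $z<x_i$ has index smaller than $i$ and has therefore already been deleted, while deleting points of $X_p$ never affects $W_{x_i}$ (which is disjoint from $X_p$). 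Consequently $\hat U_{x_i}^{Y_{i-1}}=W_{x_i}$.

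The heart of the argument is to prove that $W_{x_i}$ is contractible (that is, dismantlable). Writing $A=f_{qp}^{-1}(U_{x_i})$, which is a downset of $X_q$, and using functoriality $f_{p'p}=f_{qp}f_{p'q}$, I would recognize $W_{x_i}$ as the non-Hausdorff homotopy colimit of the sub-diagram $Y^A:U_q^P\to\P_{<\infty}$ given by $Y^A_{p'}=f_{p'q}^{-1}(A)$, indexed by the downset $U_q^P$ of $q$ in $P$; here $A$ being a downset is precisely what makes the preimages $f_{p'q}^{-1}(A)$ assemble into a well-defined diagram. Since $q$ is the maximum of $U_q^P$ and $Y^A_q=A=f_{qp}^{-1}(U_{x_i})$ is contractible by hypothesis, Corollary \ref{maximum} together with Remark \ref{sonbeat} shows that $\hocolim Y^A=W_{x_i}$ is contractible. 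Thus $x_i$ is a down weak point of $Y_{i-1}$, and the chain $\hocolim X=Y_0\co Y_1\co\cdots\co Y_n=\hocolim X_{|P\smallsetminus\{p\}}$ proves the statement, the final weak equivalence following because collapses are simple, hence weak, equivalences.

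The main obstacle I anticipate is the contractibility of $W_{x_i}$: in contrast with Proposition \ref{ubp}, the removed points are genuine weak points and not beat points, so $\hat U_{x_i}=W_{x_i}$ is not contractible for formal reasons and must be handled by re-expressing it as a homotopy colimit over $U_q^P$ with contractible apex $A$. The care is concentrated in the bookkeeping: verifying that $W_{x_i}$ coincides with $\hocolim Y^A$ as a poset (orders, not just underlying sets), that $Y^A$ is a bona fide $\P_{<\infty}$-diagram, and that Remark \ref{sonbeat} legitimately upgrades the collapse of Corollary \ref{maximum} to genuine dismantlability of the apex-contractible case.
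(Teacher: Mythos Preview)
Your proposal is correct and follows essentially the same approach as the paper: remove the points of $X_p$ along a linear extension, identify each $\hat U_{x_i}^{Y_{i-1}}$ with a non-Hausdorff homotopy colimit indexed by $\hat U_p^P=U_q^P$ whose value at the maximum $q$ is $f_{qp}^{-1}(U_{x_i})$, and conclude contractibility via Corollary~\ref{maximum} and Remark~\ref{sonbeat}. Your diagram $Y^A$ with $Y^A_{p'}=f_{p'q}^{-1}(A)=f_{p'p}^{-1}(U_{x_i})$ is exactly the paper's $\tilde X^i$, only written through the factorization $f_{p'p}=f_{qp}f_{p'q}$.
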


\begin{proof}
Let $x_0, x_1, \cdots, x_n$ a linear extension of $X_p$. 
Define $Y_0=\hocolim X$, and inductively $Y_{i+1}=Y_{i}\smallsetminus \{x_i\}$ for every $0\leq i \leq n$.
We will show that $Y_0 \co Y_1 \co \cdots \co Y_{n+1}=(\hocolim X)\smallsetminus X_p.$

Note that $\hat U_{x_i}^ {Y_i}=\hocolim \tilde X^ i$, where
$\tilde X^ i:\hat U_p^ P \rightarrow \P_{<\infty}$ is the functor defined by
$\tilde X^ i(p')=f_{p'p}^{-1}(U_{x_i})$, for all $p'< p$ in $P$ (where the transition maps are induced by the original transition maps).

Since $\hat U_p^ P$ has a maximum element $q$ and  $f_{qp}^{-1}(U_{x_i})$ is contractible,  by Corollary \ref{maximum} and Remark \ref{sonbeat},   
$\hat U_{x_i}^ {Y_i}$ is a contractible finite space. This proves that $x_i$ is a weak point of $Y_i$. 
\end{proof}

\begin{proposition}\label{dbpgen}  Let $X:P\rightarrow \P_{<\infty}$ be a $P$-
diagram of finite posets.
If $p$ is a down beat point of $P$ dominated by $q$
and $f_{qp}$ is a weak equivalence, then
$$\hocolim X\we \hocolim X_{|_{P\smallsetminus \{p\}}}.$$
\end{proposition}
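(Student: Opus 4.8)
The plan is to deduce this from Proposition \ref{dbp} by replacing $X$ with a weakly equivalent diagram whose transition map at $p$ has contractible point‑fibers, so that the hypothesis of Proposition \ref{dbp} holds automatically. Since $p$ is a down beat point of $P$ dominated by $q$, every $p'<p$ satisfies $p'\le q$ (as $q=\max \hat U_p^P$), and $q<p''$ for every $p''>p$. I would first use this to check that the function $g\colon P\to P$ defined by $g(p)=q$ and $g(p')=p'$ for $p'\ne p$ is order preserving; it is an idempotent retraction of $P$ onto $P\smallsetminus\{p\}$ that pushes $p$ down to $q$, and the transition maps below $p$ factor as $f_{p'p}=f_{qp}f_{p'q}$.

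Next I would set $X'=Xg$, the pullback of $X$ along $g$. By construction $X'_{p'}=X_{g(p')}$, so $X'_p=X_q$, the transition map $f'_{qp}=X(g(q)\le g(p))=X(q\le q)$ is the identity of $X_q$, and the restriction $X'_{|_{P\smallsetminus\{p\}}}$ coincides with $X_{|_{P\smallsetminus\{p\}}}$ because $g$ fixes $P\smallsetminus\{p\}$. Since $g(p')\le p'$ for all $p'$, the maps $\eta_{p'}=f_{g(p')p'}\colon X'_{p'}=X_{g(p')}\to X_{p'}$ assemble into a morphism of diagrams $\eta\colon X'\to X$ (each naturality square commutes because both composites equal $f_{g(p')p''}$). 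This $\eta$ is the identity at every $p'\ne p$ and equals the weak equivalence $f_{qp}$ at $p$, so it is a levelwise weak equivalence; the Homotopy Lemma (Corollary \ref{homotopy}) then yields $\hocolim X'\we\hocolim X$.

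Finally I would apply Proposition \ref{dbp} to the diagram $X'$. The point $p$ is still a down beat point of $P$ dominated by $q$, and now $(f'_{qp})^{-1}(U_x)=U_x^{X_q}$ for every $x\in X'_p=X_q$, which is contractible since it has the maximum $x$. Hence Proposition \ref{dbp} gives $\hocolim X'\co \hocolim X'_{|_{P\smallsetminus\{p\}}}=\hocolim X_{|_{P\smallsetminus\{p\}}}$, and since collapses are weak equivalences, combining this with the previous step produces
\[\hocolim X\we \hocolim X'\we \hocolim X_{|_{P\smallsetminus\{p\}}},\]
as desired. The only real subtlety, and the step I would be most careful about, is verifying that $g$ is order preserving and that $\eta$ is natural: both rely precisely on the domination hypothesis $q=\max\hat U_p^P$ and on the factorization of the transition maps below $p$ through $q$. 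Everything else is a formal consequence of the already established Proposition \ref{dbp} and Homotopy Lemma.
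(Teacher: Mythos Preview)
Your proof is correct and follows essentially the same approach as the paper: your map $g$ is precisely the composite $ir$ of the paper's retraction $r:P\to P\smallsetminus\{p\}$ with the inclusion $i$, your diagram $X'=Xg$ is the paper's $(ir)^*X$, and your morphism $\eta$ is the paper's $\gamma$. The only difference is that you spell out explicitly why $g$ is order preserving and why the hypothesis of Proposition~\ref{dbp} holds for $X'$ (namely that $(f'_{qp})^{-1}(U_x)=U_x$ has a maximum), details the paper leaves implicit.
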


\begin{proof}
Since $p$ is dominated by $q$, there is a well defined strong homotopy retraction 
$r:P\to P\smallsetminus \{p\}$ which is the identity for any $p'\neq p$ and $r(p)=q$. Denote by $i:P\smallsetminus\{p\}\to P$ the inclusion. There is a morphism of $P$-diagrams $\gamma:(ir)^*X\to X$ such that $\gamma_{p'}$ is the identity for every $p'\neq p$ and  $\gamma_p=f_{qp}:((ir)^*X)_p=X_q\to X_p$. By hypothesis and Corollary \ref{homotopy}, $\gamma$ induces a weak equivalence $\hocolim (ir)^*X\we \hocolim X$, and by Proposition \ref{dbp},
 $\hocolim (ir)^*X\we  \hocolim X_{|_{P\smallsetminus \{p\}}}$.
\end{proof}

\begin{example} By the previous proposition one can immediately deduce that the non-Hausdorff homotopy pushout of the poset diagram
 $$\xymatrix{X_0\ar[r]^{f_1}\ar[d]_{f_2}&X_1\\
		X_2&}$$ is weak equivalent to $X_0$ provided the maps $f_1$ and $f_2$ are weak equivalences.
\end{example}

We prove now an analogue, in the context of posets, of a cofinality theorem of Bousfield and Kan. We need first a generalization of Proposition \ref{ubp}. 

\begin{proposition}\label{up wp}
 Let $X:P\rightarrow 
\P_{<\infty}$ be a $P$-diagram of finite posets. 
If $p$ is a point of $P$ such that $\hat F_p$ is homotopically trivial, then $\hocolim X\we \hocolim X_{|_{P\smallsetminus \{p\}}}$.

\end{proposition}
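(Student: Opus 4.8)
The plan is to follow the pattern of Propositions \ref{ubp} and \ref{dbp}, removing the points of $X_p$ from $\hocolim X$ one at a time, but now showing that each removed point is a $\gamma$-point (rather than an up beat point as in Proposition \ref{ubp}), so that its deletion yields a simply equivalent, hence weak equivalent, poset. First I would fix a linear extension $x_0, x_1, \dots, x_n$ of $X_p^{op}$, set $Y_0 = \hocolim X$ and define inductively $Y_{i+1} = Y_i \smallsetminus \{x_i\}$, so that $Y_{n+1} = (\hocolim X) \smallsetminus X_p = \hocolim X_{|_{P \smallsetminus \{p\}}}$. It then suffices to prove that $\hat F_{x_i}^{Y_i}$ is homotopically trivial for each $i$, since in that case $x_i$ is a $\gamma$-point of $Y_i$ and hence $Y_i \se Y_{i+1}$.

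The key step is to identify $\hat F_{x_i}^{Y_i}$ as a non-Hausdorff homotopy colimit over $\hat F_p^P$. Because $x_0, \dots, x_{i-1}$ precede $x_i$ in a linear extension of $X_p^{op}$, every element of $X_p$ that is strictly greater than $x_i$ has already been removed; thus in $Y_i$ the point $x_i$ has no successors inside $X_p$, and its strict successors are exactly the elements $y \in X_{p'}$ with $p' > p$ and $y \geq f_{pp'}(x_i)$. Consequently
$$\hat F_{x_i}^{Y_i} = \hocolim \bar X^i, \qquad \bar X^i : \hat F_p^P \to \P_{<\infty}, \quad \bar X^i(p') = F_{f_{pp'}(x_i)}^{X_{p'}},$$
where the transition maps are the restrictions of the original $f_{p'p''}$ (well defined since $f_{p'p''}(f_{pp'}(x_i)) = f_{pp''}(x_i)$). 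Now each fiber $F_{f_{pp'}(x_i)}^{X_{p'}}$ is contractible, having $f_{pp'}(x_i)$ as minimum, so the canonical morphism from $\bar X^i$ to the constant one-point diagram over $\hat F_p^P$ is a levelwise weak equivalence. By the Homotopy Lemma (Corollary \ref{homotopy}) it induces a weak equivalence $\hat F_{x_i}^{Y_i} = \hocolim \bar X^i \we \hat F_p^P$ (the non-Hausdorff homotopy colimit of the constant point diagram over $\hat F_p^P$ being $\hat F_p^P$ itself). Since $\hat F_p$ is homotopically trivial by hypothesis, so is $\hat F_{x_i}^{Y_i}$.

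This shows each $x_i$ is a $\gamma$-point of $Y_i$, whence $Y_i \se Y_{i+1}$, and chaining these gives $\hocolim X = Y_0 \se \cdots \se Y_{n+1} = \hocolim X_{|_{P \smallsetminus \{p\}}}$, so the two are weak equivalent. The main obstacle is the identification of $\hat F_{x_i}^{Y_i}$ as $\hocolim \bar X^i$: one must check carefully that the chosen linear extension leaves no element of $X_p$ above $x_i$ in $Y_i$, and that the up-sets $F_{f_{pp'}(x_i)}^{X_{p'}}$ assemble into a genuine diagram over $\hat F_p^P$ whose homotopy colimit reproduces, as a poset with its induced order, the up-set of $x_i$ in $Y_i$. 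Once this combinatorial bookkeeping is in place, the automatic contractibility of the fibers and the Homotopy Lemma finish the argument, with no further hypotheses on the transition maps required.
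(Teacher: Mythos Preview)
Your proposal is correct and follows essentially the same route as the paper: the paper also removes the points of $X_p$ one by one along a linear extension of $X_p^{op}$, identifies each $\hat F_{x_i}^{Y_i}$ as the non-Hausdorff homotopy colimit of the diagram $\tilde X^i:\hat F_p\to\P_{<\infty}$, $\tilde X^i(p')=F_{f_{pp'}(x_i)}$, and then uses contractibility of the fibers together with the Homotopy Lemma (Corollary \ref{homotopy}) to conclude $\hocolim \tilde X^i\we \hat F_p$, which is homotopically trivial by hypothesis. Your write-up is in fact more explicit than the paper's about why the linear extension ensures no elements of $X_p$ remain above $x_i$ in $Y_i$ and about why the removed points are $\gamma$-points, but the argument is the same.
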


\begin{proof}
Let $x_1, x_2, \cdots, x_n$ be a linear extension of $X_p^{op}$.
 For each $1\leq i \leq n$, we define the $\hat F_p$-diagram  $\tilde X^ i:\hat F_p \rightarrow \P_{<\infty}$ as follows.  
 $\tilde X^ i(p')=F_{f_{pp'}(x_i)}$ and $\tilde X^ i(p'\rightarrow p'')=
 f_{p'p''|_{F_{f_{pp'}(x_i)}}}:F_{f_{pp'}(x_i)}\rightarrow F_{f_{pp''}(x_i)}$.
 
 Since $\tilde X^ i(p')$ is contractible for all $p'\in \hat F_p$, then 
 by Corollary \ref{homotopy} it follows that $\hocolim \tilde X^ i\we \hat F_p$, and by hypothesis, the last one is weakly contractible. Therefore, since
 \[\hat F_{x_i}^{(\hocolim X) \smallsetminus \{x_1, x_2, \cdots, x_{i-1}\}}
=\hocolim \tilde X^i,
\]
which is homotopically trivial, we can remove the points $x_1,\ldots,x_n$ of $X_p$ one by one, similarly as in the proof of Proposition \ref{ubp}, and the result follows.

\end{proof}

We prove now the main result of this section, which is an analogue of Bousfield-Kan's cofinality theorem in the combinatorial setting.

\begin{theorem}[Cofinality Theorem]
 Let $\varphi:P\rightarrow Q$ an order preserving map between finite posets.
Let $X:Q\rightarrow \P_{<\infty}$ be a $Q$-diagram. If $\varphi^{-1}(F_q)$ is
homotopically trivial for all $q\in Q$, then the canonical map $\hocolim \varphi^* X\rightarrow \hocolim X$
is a weak equivalence.
\end{theorem}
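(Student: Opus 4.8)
The plan is to apply Quillen's Theorem A for posets (equivalently McCord's theorem \cite[Thm 6]{Mc}) directly to the canonical poset map $\psi:\hocolim \varphi^*X\to \hocolim X$, $\psi(p,x)=(\varphi(p),x)$. First I would check that $\psi$ is order preserving, hence continuous: if $(p,x)\le (p',x')$ in $\hocolim\varphi^*X$, then $p\le p'$ and $f_{\varphi(p)\varphi(p')}(x)\le x'$, which is precisely the relation $(\varphi(p),x)\le(\varphi(p'),x')$ in $\hocolim X$. Since the hypothesis is phrased through the up-sets $\varphi^{-1}(F_q)$, I would use the version of Quillen's Theorem A governed by the minimal closed sets $F_z$ rather than the basic opens $U_z$: it suffices to show that $\psi^{-1}(F_z)$ is weakly contractible for every $z\in\hocolim X$. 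This $F$-version follows from the stated one applied to $\psi^{op}:(\hocolim\varphi^*X)^{op}\to(\hocolim X)^{op}$, using that $\K(Y)=\K(Y^{op})$ for any finite poset $Y$, so that $\psi$ and $\psi^{op}$ have the same realization.

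The heart of the argument is to identify the fiber $\psi^{-1}(F_z)$ as a non-Hausdorff homotopy colimit. Fixing $z=(q,y)$ with $y\in X_q$, one unwinds that $(p,x)\in\psi^{-1}(F_z)$ exactly when $q\le\varphi(p)$ and $f_{q\varphi(p)}(y)\le x$ in $X_{\varphi(p)}$; that is, $p$ ranges over $\varphi^{-1}(F_q)$ and $x$ over $F^{X_{\varphi(p)}}_{f_{q\varphi(p)}(y)}$. I would package this as the diagram $D:\varphi^{-1}(F_q)\to\P_{< \infty}$ with $D(p)=F^{X_{\varphi(p)}}_{f_{q\varphi(p)}(y)}$ and transition maps the restrictions of the $f_{\varphi(p)\varphi(p')}$, checking via functoriality of $X$ that these indeed land in the prescribed up-sets, and then verifying that the inherited order agrees with the Grothendieck order, so that $\psi^{-1}(F_z)=\hocolim D$.

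With this identification the conclusion is immediate. Each $D(p)$ has a minimum, namely $f_{q\varphi(p)}(y)$, hence is contractible, so the morphism of $\varphi^{-1}(F_q)$-diagrams $D\to\underline{\ast}$ to the constant one-point diagram is a levelwise weak equivalence. By the Homotopy Lemma (Corollary \ref{homotopy}) it induces a weak equivalence $\hocolim D\we \hocolim\underline{\ast}=\varphi^{-1}(F_q)$, and the latter is homotopically trivial by hypothesis. Thus $\psi^{-1}(F_z)$ is weakly contractible for every $z$, and Quillen's Theorem A yields that $\psi$ is a weak equivalence.

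I expect the main obstacle to be the bookkeeping in the first two paragraphs: matching the direction of Quillen's Theorem A to the up-set hypothesis. The naive over-fiber $\psi^{-1}(U_z)$ is controlled by $\varphi^{-1}(U_q)$ rather than $\varphi^{-1}(F_q)$, which is why the passage to opposite posets (and the $F$-version) is forced, and one must take care that the order induced on $\psi^{-1}(F_z)$ really coincides with the Grothendieck order on $\hocolim D$. Once the fiber is correctly recognized as a homotopy colimit of posets each having a minimum, the Homotopy Lemma does the rest.
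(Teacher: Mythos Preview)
Your argument is correct and takes a genuinely different route from the paper's. The paper builds an auxiliary poset $R=Q\amalg P$ (with $q\le p$ whenever $q\le\varphi(p')$ for some $p'\le p$) carrying an $R$-diagram $\tilde X$ that restricts to $X$ on $Q$ and to $\varphi^*X$ on $P$; it then removes the points of $Q$ one by one using Proposition~\ref{up wp} (each $q_j$ has $\hat F_{q_j}=\varphi^{-1}(F_{q_j})$ in the remaining poset) to get $\hocolim\tilde X\we\hocolim\varphi^*X$, and removes the points of $P$ one by one using Proposition~\ref{dbp} (each $p_i$ is a down beat point dominated by $\varphi(p_i)$ with identity transition map) to get $\hocolim\tilde X\we\hocolim X$.

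Your approach bypasses the auxiliary $R$ entirely and applies the $F$-version of Quillen's Theorem~A directly to the canonical map $\psi$. The key computation---that $\psi^{-1}(F_{(q,y)})=\hocolim D$ for a diagram over $\varphi^{-1}(F_q)$ whose values each have a minimum---is exactly the mechanism inside the proof of Proposition~\ref{up wp}, so the two arguments share the same combinatorial core. What you gain is directness: you establish in one stroke that the \emph{canonical map itself} is a weak equivalence, whereas the paper's two reductions literally prove only that source and target are each weak equivalent to $\hocolim\tilde X$ (to conclude for the canonical map one would additionally observe $j\psi\le i$ in $\hocolim\tilde X$). What the paper's approach buys is staying entirely within its reduction-method framework of deleting points, illustrating Propositions~\ref{up wp} and \ref{dbp} rather than invoking Quillen~A as a black box.
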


\begin{proof}
Let $R$ be the following poset. The underlying set is the disjoint union $Q\coprod P$. We keep the given ordering within $Q$ and $P$ and for every $q\in Q$ and $p\in P$ we set $q\leq p$ if there are $q'\in Q$ and $p'\in P$ such that $q\leq q'$, $p'\leq p$ and $\varphi(p')=q'$. Consider the  following $R$-diagram $\tilde X: R\rightarrow \P_{<\infty}$. For each $q\in Q$ take $\tilde X(q)=X_q$, set $\tilde X(p)=X_{\varphi(p)}$ for $p\in P$, 
 $\tilde X(p\rightarrow p')=X(\varphi(p)\rightarrow \varphi(p'))$,
 $\tilde X(q\rightarrow q')=X(q\rightarrow q')$, and $\tilde X(\varphi(p)\rightarrow p)=
 \mathrm{id}_{X_{\varphi(p)}}$. Note that the restriction of $\tilde X$ to $Q$ is the original diagram $X$ and the restriction of $\tilde X$ to $P$ is $\varphi^*X$.
 
 Take a linear extension $q_1, q_2, \cdots, q_m$ of $Q^{op}$. For each $1\leq j \leq m$ we have 
 $$\hat F_{q_j}^{R
\smallsetminus \{q_1, q_2, \cdots, q_{j-1}\}}=
\varphi^{-1}(F_{q_j})$$ which is homotopically trivial by hypothesis.
 By applying  recursively Proposition \ref{up wp}, we get 
$\hocolim \tilde X\we \hocolim \varphi^* X$.

Similarly,  take a linear extension $p_1, p_2, \cdots, p_n$ of $P$. For each $1\leq i \leq n$ we have $$\hat U_{p_i}^{R\smallsetminus 
\{p_1, p_2, \cdots, p_{i-1}\}}=
\varphi(U_{p_i})=U_{\varphi(p_i)}.$$ Therefore, $p_i$ is a down beat point of 
$R\smallsetminus 
\{p_1, p_2, \cdots, p_{i-1}\}$ dominated by $\varphi(p_i)$, and $\tilde X(\varphi(p_i)\rightarrow p_i)$ is the identity. By Proposition \ref{dbp} we have $\hocolim \tilde X \we \hocolim X$.
\end{proof}

\section{Variations on Thomason's theorem and applications}

In this section we prove a result that relates the homotopy colimit of a diagram of spaces with the non-Hausdorff homotopy colimit of the diagram of their models. As a consequence we obtain an alternative and simple proof of Thomason's theorem in the context of posets. As another immediate consequence of the main result we deduce that the homotopy colimit of a diagram of finite simplicial complexes is weak equivalent to the non-Hausdorff homotopy colimit of the diagram of their face posets. This implies that all the techniques developed in the previous section for non-Hausdorff homotopy colimits can be used for diagrams of polyhedra (indexed by finite posets) by means of the face poset functor.

We denote by $\mathcal{S}$ the category of topological spaces and continuous maps. Sometimes we require a diagram of spaces $D:P\to \mathcal{S}$ to satisfy extra conditions (for instance $D$ can be a diagram of simplicial complexes, finite topological spaces, etc), however in all these cases the homotopy colimit $\thocolim D$ is taken in the category $\mathcal{S}$. 

\begin{theorem}\label{main}
Let $P$ be a finite poset. Let $K:P\to \mathcal{S}$ be a diagram of spaces and $X:P\to \P_{< \infty}$ be a diagram of finite posets. Let $\phi:K\to X$ be a diagram morphism (where $X$ is viewed as a diagram of finite topological spaces) such that $\phi_p:K_p\to X_p$ is a weak equivalence for every $p\in P$.  Then there exists a weak equivalence $$\hat\phi:\thocolim K\to \hocolim X.$$
\end{theorem}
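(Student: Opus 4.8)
The plan is to build the map $\hat\phi$ and verify it is a weak equivalence by reducing to the pieces over each $p\in P$, using the fact that both sides are assembled as homotopy colimits indexed by $P$ and exploiting the local-to-global theorem of McCord \cite[Thm 6]{Mc} on the target side. The first step is to observe that the non-Hausdorff homotopy colimit $\hocolim X$, as a finite topological space, is covered by the open subsets $V_p := \hocolim X_{|_{U_p}}$. Indeed, since $\{U_x\}_{x\in X}$ forms a basis for a finite space and $U_p$ is the downset of $p\in P$, the family $\{V_p\}_{p\in P}$ is a basis-like (open) cover of $\hocolim X$ whose intersections are again of this form; this is exactly the situation McCord's theorem is designed to handle.

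Next I would construct $\hat\phi$ compatibly with this cover. The classical homotopy colimit $\thocolim K$ comes with a canonical decomposition whose pieces over $U_p$ are $\thocolim K_{|_{U_p}}$, and the diagram morphism $\phi$ induces maps on each such piece. Because $U_p$ has maximum $p$, Corollary \ref{maximum} gives $V_p = \hocolim X_{|_{U_p}} \co X_p$, so $V_p \we X_p$; on the other side, the usual cofinality/projection for ordinary homotopy colimits over a category with terminal object gives $\thocolim K_{|_{U_p}} \simeq K_p$. The map $\phi_p : K_p \to X_p$ is a weak equivalence by hypothesis, so the induced map $\thocolim K_{|_{U_p}} \to V_p$ is a weak equivalence for every $p$. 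The key point is that all of these local maps are compatible with the inclusions $U_p \subseteq U_q$ for $p\le q$, so they glue to a single global map $\hat\phi : \thocolim K \to \hocolim X$ sending the preimage of $V_p$ into $V_p$.

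Finally, I would invoke McCord's local-to-global theorem: a continuous map that restricts to a weak equivalence over each member of a basis-like open cover (and whose preimages realize a compatible cover of the source) is itself a weak equivalence. Applying this to the cover $\{V_p\}_{p\in P}$ of $\hocolim X$ and the induced cover $\{\hat\phi^{-1}(V_p)\}$ of $\thocolim K$ yields that $\hat\phi$ is a weak equivalence. This mirrors precisely the argument already used to prove the Homotopy Lemma (Corollary \ref{homotopy}), where the same cover and the same appeal to \cite[Thm 6]{Mc} appear; the present theorem is the version in which one side is a genuine topological homotopy colimit rather than a finite poset.

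The hard part will be the gluing and the bookkeeping of the preimage cover. One must check that the pieces $\thocolim K_{|_{U_p}}$ genuinely assemble the source $\thocolim K$ in a way matching the open cover $\{V_p\}$ of the target under $\hat\phi$, i.e.\ that $\hat\phi^{-1}(V_p)$ is weakly equivalent to $\thocolim K_{|_{U_p}}$ and that these form an admissible cover to which McCord's theorem applies. The other technical obstacle is verifying that the local equivalences are compatible with the $P$-indexed transition maps so that they define an honest continuous map globally, rather than merely a collection of maps defined piecewise; here the naturality built into $\phi$ and the explicit description of $\hocolim X$ in the Definition above are what make the compatibility hold.
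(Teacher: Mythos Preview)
Your strategic outline---use the basis-like cover $\{V_p = \hocolim X|_{U_p}\}$ of the target, invoke McCord's theorem \cite[Thm~6]{Mc}, and reduce each piece to the case of a poset with maximum via Corollary~\ref{maximum} and the terminal-object projection on the source---is exactly the architecture of the paper's proof. Where your proposal has a genuine gap is in the construction of $\hat\phi$ itself, which is the heart of the argument and which you have not actually specified.

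You write that the local maps on $\thocolim K|_{U_p}$ ``glue to a single global map,'' but this is precisely where the difficulty lies. The subspaces $\thocolim K|_{U_p}$ sit inside $\thocolim K$ as \emph{closed} (not open) pieces, and the only obvious candidate for a local map---project to $K_p$ using the terminal object $p$ of $U_p$, then apply $\phi_p$---does not agree on overlaps: if $q\le p$ and $q\le p'$ with $p\ne p'$, the two restrictions send a point of $K_q$ into $X_p\subset\hocolim X$ and $X_{p'}\subset\hocolim X$ respectively, and these are different elements of the poset. So there is no gluing in the naive sense, and saying that ``$\phi$ induces maps on each piece'' is circular, since producing such a map is exactly the content of the theorem. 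The paper resolves this by writing down a global formula: for $(\alpha,\beta)\in K_q\times\K(F_q)$ one sets $\hat\phi(\alpha,\beta)=\phi_{\mu_q(\beta)}\bigl(f_{q\,\mu_q(\beta)}(\alpha)\bigr)$, using the McCord map $\mu_q:\K(F_q)\to F_q$ on the simplex factor to decide coherently into which $X_r$ to land. Only with this formula can one compute $\hat\phi^{-1}(V_p)$ (which is \emph{not} $\thocolim K|_{U_p}$ but an open-star neighbourhood of it, namely $\coprod_{q\le p}K_q\times\mu_q^{-1}(U_p^{F_q})/\!\sim$) and then exhibit, via explicit linear homotopies in the simplex coordinate, a deformation retraction of that preimage onto $K_p\times\{p\}$. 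You correctly flagged this step as ``the hard part,'' but the missing idea is the use of the McCord map on the $\K(F_q)$ factor to make the map globally well defined.
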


\begin{proof}

We define first the map $\hat\phi:\thocolim K\to \hocolim X$. For every $p\leq p'$ denote by $f_{pp'}=K(p\to p')$ and $g_{pp'}=X(p\to p')$ the transition maps. Recall that $\thocolim K$ can be constructed from the disjoint union $\coprod_{p\in P} K_p\times \K(F_p)$ by identifying the pairs $(\alpha,\beta)\in K_p\times \K(F_p)$ with $(\alpha',\beta')\in K_{p'}\times \K(F_{p'})$ if $f_{pp'}(\alpha)=\alpha'$ and $\beta=\beta'\in \K(F_{p'})$. We denote by $\sim$ the equivalence relation generated by this identification.
 
For each $p\in P$, let $\mu_p:\K(F_p)\to F_p\subseteq X_p$ be the McCord map (defined at the beginning of the previous section). Given $(\alpha,\beta)\in K_p\times \K(F_p)$ we define $$\hat\phi(\alpha,\beta)=\phi_{\mu_p(\beta)}(f_{p\mu_p(\beta)}(\alpha))\in X_{\mu_p(\beta)}\subseteq \hocolim X.$$
It is easy to verify that $\hat\phi$ is well defined. In order to see that it is a continuous map, it suffices to prove that for each $y\in \hocolim X$,  $\hat\phi^{-1}(U_y^{\hocolim X})\cap (K_q\times \K(F_q))$ is open in $K_q\times \K(F_q)$ for every $q\in P$. Fix $y\in\hocolim X$ and let $p\in P$ such that $y\in X_p$. Note that $\hat\phi^{-1}(U_y^{\hocolim X})\cap (K_q\times \K(F_q))$ is empty if $q\not\leq p$ and it is equal to $(g_{qp}\phi_q)^{-1}(U_y^{X_p})\times \mu_q^{-1}(U_p^{F_q})$ if $q\leq p$. This proves that $\hat\phi$ is continuous.

In order to prove that $\hat\phi$ is a weak homotopy equivalence, we use McCord's theorem \cite[Thm 6]{Mc} for the basis-like open cover $\{\hocolim X|_{U_p}\}_{p\in P}$ of $\hocolim X$. We have to see that
$$\hat\phi:\hat\phi^{-1}(\hocolim X|_{U_p})\to \hocolim X|_{U_p}$$
is a weak equivalence for each $p$. Note that $\hat\phi^{-1}(\hocolim X|_{U_p})=\coprod_{q\leq p}K_q\times \mu_q^{-1}(U_p^{F_q}) / \sim$ and that there is a commutative diagram
$$\xymatrix{
K_p\times \{p\} \ar[r]^{\phi_p}\ar[d]_{i} & X_p\ar[d]^j\\
(\coprod_{q\leq p}K_q\times \mu_q^{-1}(U_p^{F_q}) / \sim) \ar[r]^(.6){\phi} & \hocolim X|_{U_p}}.$$
The inclusion $j$ is a weak equivalence by Corollary \ref{maximum} and $\phi_p$ is a weak equivalence by hypothesis, thus we only need to check that the inclusion $i$ is a homotopy equivalence.

Consider the retraction $r:(\coprod_{q\leq p}K_q\times \mu_q^{-1}(U_p^{F_q}) / \sim)\to K_p\times \{p\}$ defined by $r(\alpha,\beta)=(f_{qp}(\alpha),p)$ for $(\alpha,\beta)\in K_q\times \mu_q^{-1}(U_p^{F_q})$. It is clear that $ri=1$. We define a homotopy $H:ir\simeq 1$ as a composition of two linear homotopies. Any $\beta\in \mu_q^{-1}(U_p^{F_q})$ can be written as $\beta=t\beta_1+(1-t)\beta_2$ with $0<t\leq 1$, $\beta_1\in \K(U_p)$ and $\beta_2\in \K(X_p\smallsetminus U_p)$. Take $H_1((\alpha,\beta),s)=(\alpha,(1-s)\beta+s\beta_1)$. Since $\beta_1\in \K(U_p)$ which is a cone with apex $p$, we can define then $H_2((\alpha,\beta),s)=(\alpha,(1-s)\beta_1+sp)$.
\end{proof}

As a first immediate corollary we obtain Thomason's theorem for posets.

\begin{corollary}\label{thom}
Given a diagram of finite posets $X:P\to \P_{<\infty}$, there is a homotopy equivalence 
$$\thocolim \K X\to \K(\hocolim X).$$
\end{corollary}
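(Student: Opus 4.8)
The plan is to deduce the statement directly from Theorem \ref{main}, by feeding that theorem the diagram of classifying spaces together with the McCord maps, and then to transport the resulting weak equivalence from the finite space $\hocolim X$ to its classifying space $\K(\hocolim X)$.

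First I would apply Theorem \ref{main} to the diagram $K=\K X$, that is, $K_p=\K(X_p)$ with transition maps $\K(X(p\to p'))$, and to the diagram morphism $\phi=\mu$ whose component at $p$ is the McCord map $\mu_p:\K(X_p)\to X_p$. The point to check is that $\mu$ is a genuine morphism of $P$-diagrams: for an order preserving map $g$ one has $g\circ\mu=\mu\circ\K(g)$, since both composites send a chain $x_1<\cdots<x_r$ to $g(x_1)$, the image of its minimum. Each $\mu_p$ is a weak equivalence by McCord's theorem, so the hypotheses of Theorem \ref{main} are satisfied and I obtain a weak equivalence $\hat\mu:\thocolim\K X\to\hocolim X$.

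It then remains to replace the target $\hocolim X$ by its classifying space. For this I would use the McCord map $\mu:\K(\hocolim X)\to\hocolim X$ of the finite space $\hocolim X$, which is again a weak equivalence, i.e. a CW-approximation of $\hocolim X$. Since $\thocolim\K X$ is a CW-complex, the map $\hat\mu$ lifts through this CW-approximation: there is a map $\psi:\thocolim\K X\to\K(\hocolim X)$, unique up to homotopy, with $\mu\circ\psi\simeq\hat\mu$. By the two-out-of-three property $\psi$ is a weak equivalence, and as both $\thocolim\K X$ and $\K(\hocolim X)$ are CW-complexes, Whitehead's theorem upgrades $\psi$ to a homotopy equivalence, which is the map we seek.

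The invocation of Theorem \ref{main} and the verification of naturality of $\mu$ are routine. The only genuinely delicate point is the last step: $\hat\mu$ and $\mu$ are merely \emph{weak} equivalences into the non-Hausdorff space $\hocolim X$, which is not a CW-complex, so neither map is a priori a homotopy equivalence and one cannot simply invert $\mu$. The main obstacle is therefore to turn this cospan of weak equivalences between CW-complexes into an honest homotopy equivalence; the lifting property of CW-approximations together with Whitehead's theorem is exactly what makes this possible.
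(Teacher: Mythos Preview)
Your proof is correct and follows the same approach as the paper: apply Theorem \ref{main} to the diagram morphism $\mu:\K X\to X$ given by the McCord maps. The paper's proof is a single sentence and leaves implicit the passage from the weak equivalence $\hat\mu:\thocolim\K X\to\hocolim X$ to a homotopy equivalence $\thocolim\K X\to\K(\hocolim X)$; you have made this step explicit via CW-approximation and Whitehead's theorem, which is exactly the right way to fill that gap.
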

\begin{proof}
Apply Theorem \ref{main} to the diagram morphism $\mu:\K X\to X$, where  $\mu_p:\K(X_p)\to X_p$ is the McCord map.
\end{proof}

Now we prove a kind of converse of Thomason's result, which relates the homotopy colimit of a diagram of simplicial complexes with the non-Hausdorff homotopy colimit of their face posets.

\begin{corollary}\label{antithom}
Let $K:P\to \mathcal{S}$ be a diagram of finite simplicial complexes. Then there is a weak equivalence $$\nu:\thocolim K\to \hocolim (\X K)^{op}$$
from the homotopy colimit of $K$ to the non-Hausdorff homotopy colimit of the diagram of the opposite of their face posets.
\end{corollary}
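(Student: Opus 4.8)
The plan is to obtain this as a one-step consequence of Theorem \ref{main}, the point being that the natural weak equivalence $\nu$ recalled at the start of Section \ref{methods} is exactly the input that theorem demands. For a single finite simplicial complex $L$, Barmak's map $\nu_L\colon L\to \X(L)^{op}$, sending a point $x$ to the unique simplex $\sigma$ with $x\in\overset{\circ}{\sigma}$, is a weak equivalence by \cite[Thm 11.3.2]{Ba1}; crucially, it is \emph{natural} with respect to simplicial maps, and it is precisely this naturality (unavailable for the non-opposite McCord map $\tilde\nu\colon L\to\X(L)$) that forces the passage to the opposite face poset.

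First I would spell out the naturality. Given a simplicial map $g\colon L\to L'$, inducing the order-preserving map $\X(g)\colon \X(L)^{op}\to\X(L')^{op}$, I would check that $\nu_{L'}\circ g=\X(g)\circ\nu_L$: if $x$ lies in $\overset{\circ}{\sigma}$, then $g$ restricts to an affine surjection of $\sigma$ onto $\X(g)(\sigma)=g(\sigma)$ and carries relative interiors to relative interiors, so $g(x)\in\overset{\circ}{\X(g)(\sigma)}$ and both composites send $x$ to $\X(g)(\sigma)$. This is the content of the naturality asserted in the excerpt, and is the only genuinely geometric verification in the argument.

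Applying $\nu$ fibrewise to the given diagram $K\colon P\to\mathcal{S}$, naturality guarantees that the family $\{\nu_{K_p}\}_{p\in P}$ assembles into a morphism of $P$-diagrams $\nu\colon K\to(\X K)^{op}$, where $(\X K)^{op}\colon P\to\P_{<\infty}$ is the diagram of opposite face posets, with transition maps the order-preserving maps $\X(f_{pp'})$ induced by the simplicial transition maps $f_{pp'}$ of $K$. Each component $\nu_{K_p}\colon K_p\to\X(K_p)^{op}$ is a weak equivalence. I would then invoke Theorem \ref{main} with $\phi=\nu$ and $X=(\X K)^{op}$: its hypotheses hold verbatim, so the theorem yields a weak equivalence $\hat\nu\colon\thocolim K\to\hocolim(\X K)^{op}$, which is the map named $\nu$ in the statement. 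The main obstacle is conceptual rather than technical---recognizing that the opposite face poset must be used so that $\nu$ is an honest diagram morphism and not merely a homotopy-commuting family; once that is understood, the corollary is an immediate specialization of Theorem \ref{main}.
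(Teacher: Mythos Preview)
Your proposal is correct and follows exactly the paper's approach: the paper's proof consists of the single sentence ``Apply Theorem \ref{main} to the diagram morphism induced by the natural weak equivalences $\nu_p:K_p\to \X(K_p)^{op}$,'' and you have merely unpacked the naturality verification that the paper leaves implicit. Your explanation of why the opposite face poset is needed (so that $\nu$ is strictly natural, not just natural up to homotopy) is precisely the point emphasized in the paper's discussion preceding the statement.
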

\begin{proof}
Apply Theorem \ref{main} to the diagram morphism induced by the natural weak equivalences $\nu_p:K_p\to \X(K_p)^{op}$ defined in \cite[Thm 11.3.2]{Ba1} and the beginning of the previous section.
\end{proof}

Since for every simplicial complex $L$, $\K(\X(L)^{op})=\K(\X(L))=L'$ (the barycentric subdivision of $L$), from Corollary \ref{thom} and Corollary \ref{antithom} we deduce the following.

\begin{corollary}
Let $K:P\to \mathcal{S}$ be a diagram of (unordered) finite simplicial complexes (and simplicial maps). Then $\thocolim K$ and $\thocolim K'$ are homotopy equivalent, where $K':P\to \mathcal{S}$ is the diagram of the barycentric subdivisions (of spaces and maps).
\end{corollary}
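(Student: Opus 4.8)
The plan is to relate each diagram, $K$ and $K'$, to a common non-Hausdorff homotopy colimit built from face posets, and then conclude by comparing those finite-space targets. First I would recall the two bridges already established. By Corollary \ref{antithom} applied to the diagram $K$, there is a weak equivalence
\[
\nu:\thocolim K\to \hocolim (\X K)^{op},
\]
so $\thocolim K$ has the homotopy type of the classifying space $\K(\hocolim (\X K)^{op})$. The same corollary applied to the diagram $K'$ of barycentric subdivisions gives a weak equivalence $\thocolim K'\to \hocolim (\X K')^{op}$. Thus it suffices to show that the two finite spaces $\hocolim (\X K)^{op}$ and $\hocolim (\X K')^{op}$ have the same weak homotopy type, i.e. that their classifying spaces are homotopy equivalent.

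The key observation I would exploit is the identity recorded just before the statement: for any simplicial complex $L$ one has $\K(\X(L)^{op})=\K(\X(L))=L'$, the barycentric subdivision of $L$. The strategy is therefore to pass through classifying spaces via Thomason's theorem for posets (Corollary \ref{thom}). Applying Corollary \ref{thom} to the diagram of finite posets $(\X K)^{op}$ yields a homotopy equivalence
\[
\thocolim \K\,(\X K)^{op}\to \K\bigl(\hocolim (\X K)^{op}\bigr),
\]
and by the pointwise identity $\K((\X K_p)^{op})=K_p'$ the left-hand side is the homotopy colimit $\thocolim K'$ of the subdivision diagram (at least up to the natural identification of spaces; here one uses that barycentric subdivision is functorial on the nose for simplicial maps, so the diagram $\K(\X K)^{op}$ of classifying spaces literally coincides with the diagram $K'$). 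Hence $\thocolim K'\simeq \K(\hocolim (\X K)^{op})$. Combining this with $\thocolim K\simeq \K(\hocolim (\X K)^{op})$ from the first paragraph gives the desired homotopy equivalence $\thocolim K\simeq \thocolim K'$.

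I expect the main obstacle to be the careful bookkeeping of functoriality and naturality when identifying the diagram of classifying spaces $\K(\X K)^{op}$ with the diagram $K'$ of barycentric subdivisions. The pointwise equality $\K((\X K_p)^{op})=K_p'$ is clean, but to apply Thomason's theorem (Corollary \ref{thom}) to the whole diagram one must check that the transition maps match: the map $\K(\X(f)^{op})$ induced by a simplicial map $f$ must agree, under the geometric identification, with the barycentric subdivision $f'$ of $f$ as a map of simplicial complexes. This is precisely the naturality statement that fails for the \emph{unordered} McCord map $\tilde\nu$ but holds for Barmak's natural weak equivalence $\nu$, which is why Corollary \ref{antithom} (and hence Theorem \ref{main}, with its naturality hypothesis) is the right tool. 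Once this diagram-level identification is in place, the rest is a formal concatenation of the two weak equivalences and the Thomason equivalence, with no further computation required.
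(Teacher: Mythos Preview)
Your argument is correct and matches the paper's proof exactly: combine Corollary~\ref{antithom} for $K$ with Corollary~\ref{thom} applied to the diagram $(\X K)^{op}$, using that $\K((\X K_p)^{op})=K_p'$ functorially so that $\K\,(\X K)^{op}$ is literally the diagram $K'$. The detour in your first paragraph through $\hocolim(\X K')^{op}$ is never actually used and can be dropped.
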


Note that the homotopy equivalence of the last corollary cannot be deduced directly from a diagram map between $K$ and $K'$ since, although the underlying topological spaces of $K_p$ and $K'_p$ are equal and the transition maps $f_{qp}$ and $f'_{qp}$ are (linearly) homotopic, in the context of unordered simplicial complexes there is no \it natural \rm homotopy equivalence  from the barycentric subdivision functor to the identity functor. Moreover, in general the homotopies $f_{qp}\simeq f'_{qp}$ cannot be taken coherently.

\begin{remark}
From Theorem \ref{main} one can easily deduce that if $X:P\to \mathcal{S}$ is a diagram of finite topological spaces, although $\thocolim X$ and $\hocolim X$ are very different, there is a weak homotopy equivalence $\thocolim X \to \hocolim X$ induced by the identity map. 
\end{remark}

Corollary \ref{antithom} allows one to apply the results of Section \ref{methods} to homotopy colimits of diagrams of polyhedra, by means of the weak equivalence with the non-Hausdorff homotopy colimits of the opposite of the face posets. In particular, the following analogues of Propositions \ref{up wp} and \ref{dbpgen} are valid for diagrams of simplicial complexes.

\begin{proposition}\label{up sc}
 Let $K:P\rightarrow \mathcal{S}$
be a $P$-diagram of finite simplicial complexes (and simplicial maps). If $p$ is a point of $P$ such that $\hat F_p$ is homotopically trivial (in particular if $p$ is an up beat point or an up weak point), then $\thocolim K\simeq \thocolim K_{|_{P\smallsetminus \{p\}}}$.
\end{proposition}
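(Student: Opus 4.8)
The plan is to transport the statement from simplicial complexes to finite posets, where Proposition \ref{up wp} already carries out the reduction, and then to transport the conclusion back. The bridge is Corollary \ref{antithom}: to the $P$-diagram $K$ of finite simplicial complexes it associates the $P$-diagram of finite posets $(\X K)^{op}$ together with a weak equivalence $\thocolim K \we \hocolim (\X K)^{op}$.

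First I would record that forming the (opposite) face poset commutes with restriction to a subposet, since both operations are performed objectwise: the diagram $(\X(K_{|_{P\smallsetminus\{p\}}}))^{op}$ over $P\smallsetminus\{p\}$ coincides with the restriction $(\X K)^{op}_{|_{P\smallsetminus\{p\}}}$. Applying Corollary \ref{antithom} to $K$ and to $K_{|_{P\smallsetminus\{p\}}}$ then yields
$$\thocolim K \we \hocolim (\X K)^{op}, \qquad \thocolim K_{|_{P\smallsetminus\{p\}}} \we \hocolim (\X K)^{op}_{|_{P\smallsetminus\{p\}}}.$$

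Next I would apply Proposition \ref{up wp} to the $P$-diagram of finite posets $(\X K)^{op}$. Its hypothesis is a condition on the indexing poset $P$ alone --- that $\hat F_p$ be homotopically trivial --- which is precisely our assumption, so it gives $\hocolim (\X K)^{op} \we \hocolim (\X K)^{op}_{|_{P\smallsetminus\{p\}}}$. Chaining the three weak equivalences produces
$$\thocolim K \we \hocolim (\X K)^{op} \we \hocolim (\X K)^{op}_{|_{P\smallsetminus\{p\}}} \we \thocolim K_{|_{P\smallsetminus\{p\}}},$$
whence $\thocolim K$ and $\thocolim K_{|_{P\smallsetminus\{p\}}}$ are weakly homotopy equivalent.

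The only place where the statement asks for more than the posetal machinery directly provides is the upgrade from a weak equivalence to the honest homotopy equivalence $\simeq$ demanded by the conclusion; I expect this to be the sole point needing care rather than a genuine obstacle. As in the construction recalled in the proof of Theorem \ref{main}, the homotopy colimit of a diagram of finite simplicial complexes is obtained by gluing the finite CW-complexes $K_q\times\K(F_q)$ along subcomplexes, so it has the homotopy type of a CW-complex; Whitehead's theorem then converts the weak equivalence above into a homotopy equivalence, completing the argument.
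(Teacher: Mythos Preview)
Your proof is correct and follows exactly the paper's approach, which is simply to apply Proposition~\ref{up wp} to the diagram $(\X K)^{op}$ and invoke Corollary~\ref{antithom}. You have spelled out the details the paper leaves implicit, including the observation that face posets commute with restriction and the Whitehead upgrade from weak to genuine homotopy equivalence.
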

\begin{proof}
Consider the diagram $(\X K)^{op}:P\to \P_{<\infty}$, $(\X K)^{op}(p)=(\X(K_p))^{op}$, and apply Proposition \ref{up wp} and Corollary \ref{antithom}.
\end{proof}

\begin{proposition} \label{dbp sc}
Let $K:P\rightarrow \mathcal{S}$ be a $P$-
diagram of finite simplicial complexes (and simplicial maps).
If $p$ is a down beat point of $P$ dominated by $q$
and $f_{qp}$ is a homotopy equivalence, then
$$\thocolim K\simeq \thocolim K_{|_{P\smallsetminus \{p\}}}.$$
\end{proposition}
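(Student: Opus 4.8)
The plan is to reduce to the poset setting by passing to the diagram of opposite face posets and then invoke Proposition \ref{dbpgen}, exactly in the spirit of how Proposition \ref{up sc} was deduced from Proposition \ref{up wp}. First I would consider the diagram $(\X K)^{op}:P\to\P_{<\infty}$ given pointwise by $(\X K)^{op}(p')=\X(K_{p'})^{op}$, with transition maps $\X(f_{qp})^{op}$, and aim to verify the hypotheses of Proposition \ref{dbpgen} for it. The condition that $p$ is a down beat point of $P$ dominated by $q$ is a condition on the indexing poset $P$ alone, so it carries over verbatim. Hence the only nontrivial point is to check that the poset transition map $\X(f_{qp})^{op}:\X(K_q)^{op}\to\X(K_p)^{op}$ is a weak equivalence.

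This is where the naturality of the McCord--Barmak map is essential. The maps $\nu_{p'}:K_{p'}\to\X(K_{p'})^{op}$ from \cite[Thm 11.3.2]{Ba1} assemble into a morphism of diagrams $\nu:K\to(\X K)^{op}$; this is precisely why Barmak's natural variant $\nu$ is used in place of McCord's $\tilde\nu$. Naturality of $\nu$ gives a commutative square with horizontal arrows $\nu_q,\nu_p$ and vertical arrows $f_{qp}$ and $\X(f_{qp})^{op}$. Since $\nu_q$ and $\nu_p$ are weak equivalences and $f_{qp}$ is a homotopy equivalence (in particular a weak equivalence), the two-out-of-three property forces $\X(f_{qp})^{op}$ to be a weak equivalence, as required.

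With the hypotheses in place, Proposition \ref{dbpgen} yields $\hocolim(\X K)^{op}\we\hocolim(\X K)^{op}_{|_{P\smallsetminus\{p\}}}$, where I use that restriction along $P\smallsetminus\{p\}\hookrightarrow P$ commutes with the pointwise functor $\X(-)^{op}$. Finally, Corollary \ref{antithom}, applied both to $K$ and to $K_{|_{P\smallsetminus\{p\}}}$, supplies weak equivalences $\thocolim K\we\hocolim(\X K)^{op}$ and $\thocolim K_{|_{P\smallsetminus\{p\}}}\we\hocolim(\X K)^{op}_{|_{P\smallsetminus\{p\}}}$. Concatenating these three weak equivalences produces a weak equivalence $\thocolim K\we\thocolim K_{|_{P\smallsetminus\{p\}}}$. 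Since the homotopy colimit of a diagram of finite simplicial complexes has the homotopy type of a CW-complex, Whitehead's theorem upgrades this weak equivalence to the genuine homotopy equivalence claimed in the statement.

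The only genuinely new ingredient over Proposition \ref{up sc} is the two-out-of-three argument identifying $\X(f_{qp})^{op}$ as a weak equivalence; everything else is formal transport through Corollary \ref{antithom}. I expect no real obstacle, since the essential work of arranging naturality was already carried out in the construction of the map $\nu$.
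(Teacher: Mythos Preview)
Your proof is correct and follows exactly the approach of the paper, which simply says ``Consider the diagram $(\X K)^{op}:P\to \P_{<\infty}$ and apply Proposition \ref{dbpgen} and Corollary \ref{antithom}.'' You have merely unpacked the one step the paper leaves implicit---namely, that $\X(f_{qp})^{op}$ is a weak equivalence---and your two-out-of-three argument via the naturality of $\nu$ is precisely the intended justification (indeed, this is why the paper insists on Barmak's natural variant of the McCord map in the first place).
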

\begin{proof}
Consider the diagram $(\X K)^{op}:P\to \P_{<\infty}$ and apply Proposition \ref{dbpgen} and Corollary \ref{antithom}.
\end{proof}

Suppose that $K:P\to \mathcal{S}$ is a diagram of finite simplicial complexes such that all the transition  maps $f_{qp}:K_q\to K_p$ are homotopy equivalences (in particular, if $P$ is connected, all $K_p$ have the same homotopy type). In general, although the simplicial maps $f_{qp}:K_q\to K_p$ are homotopy equivalences,  the homotopy type of $K_p$ and the topology of $\K(P)$ don't determine the homotopy type of $\thocolim K$. We will show that if the indexing poset $P$ and the maps $f_{qp}$ satisfy nice conditions, then $\thocolim K\simeq K_p$ (for any $p\in P$).

\begin{corollary}\label{index contractible}
Let $K:P\rightarrow \mathcal{S}$ be a $P$-diagram of finite simplicial complexes (and simplicial maps). If $P$ is a contractible finite space (i.e. a dismantlable poset) and the transition maps $f_{qp}$ are homotopy equivalences, then $\thocolim K\simeq K_p$ (for any $p\in P$).
\end{corollary}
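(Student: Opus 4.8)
The plan is to dismantle the indexing poset $P$ one beat point at a time and track how $\thocolim K$ changes under each removal, using the two reduction propositions just established. First I would invoke dismantlability of $P$ to fix an enumeration $p_1, p_2, \ldots, p_n$ of the points of $P$ other than a chosen $p_0$, such that each $p_i$ is a beat point (up or down) of the poset $P_i := P \smallsetminus \{p_1, \ldots, p_{i-1}\}$, with $P_1 = P$ and $P_{n+1} = \{p_0\}$ the singleton left at the end. Such an enumeration exists precisely because $P$ is dismantlable.

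Next I would show inductively that $\thocolim K_{|_{P_i}} \simeq \thocolim K_{|_{P_{i+1}}}$ for each $i$. The key preliminary observation is that the restricted diagram $K_{|_{P_i}}$ has all of its transition maps among the original $f_{qp}$, so they are still homotopy equivalences; thus the standing hypothesis is preserved under restriction. There are then two cases. If $p_i$ is an up beat point of $P_i$, then $\hat F_{p_i}^{P_i}$ has a minimum and is therefore contractible, in particular homotopically trivial, so Proposition \ref{up sc} applied to $K_{|_{P_i}}$ yields the equivalence. If instead $p_i$ is a down beat point of $P_i$, dominated by some $q$, then $f_{q p_i}$ is a homotopy equivalence by hypothesis, and Proposition \ref{dbp sc} supplies the equivalence.

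Composing these equivalences across all $i$ produces $\thocolim K \simeq \thocolim K_{|_{\{p_0\}}} = K_{p_0}$. Finally, since $P$ is dismantlable it is connected, and as every transition map is a homotopy equivalence, a fence joining any two points shows that all the $K_p$ share a common homotopy type; hence $K_{p_0} \simeq K_p$ for every $p \in P$, giving $\thocolim K \simeq K_p$.

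The only genuine work lies in the bookkeeping of the inductive step: one must confirm that at each stage the point being removed really is a beat point of the \emph{current} poset $P_i$ (so that the beat-point hypotheses of Propositions \ref{up sc} and \ref{dbp sc} are met) and that restriction preserves the assumption that all transition maps are homotopy equivalences. I do not expect either of these to be a deep obstacle, since both follow directly from the definition of dismantlability and of the restricted diagram; the substance of the argument has already been absorbed into the two reduction propositions, so the proof should be short once they are in hand.
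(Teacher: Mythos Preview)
Your proposal is correct and follows exactly the paper's approach: dismantle $P$ one beat point at a time and invoke Propositions \ref{up sc} and \ref{dbp sc} at each step, which is precisely what the paper's (much terser) proof does. Your added justification of the ``for any $p$'' clause via connectedness and fences is a reasonable elaboration that the paper leaves implicit.
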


\begin{proof}
Since $P$ is a contractible finite space, there is a sequence $p_1, \cdots, p_n$ such that  $p_i$ is a beat point (up or down) of $P\smallsetminus \{p_1, p_2,
\cdots, p_{i-1}\}$ and $P\smallsetminus \{p_1,
p_2, \cdots, p_n\}=\{p\}$. Now apply recursively Propositions \ref{up sc} and \ref{dbp sc}.
\end{proof}

As we state at the beginning of the previous section, if $P$ is contractible (as a finite space) then its classifying space $\K(P)$ is contractible but the converse does not hold. In \cite{Ba1, BM1, BM2} there are various examples of non-contractible finite spaces $P$ with $\K(P)$ contractible. Suppose that the indexing poset $P$ can be reduced to a single point by removing $\gamma$-points (recall that $p$ is a $\gamma$-point if $\hat F_p$ or $\hat U_p$ is homotopically trivial). In that case the previous corollary is not longer valid since Proposition \ref{dbp sc} works only for down beat points (i.e. when $\hat U_p$ is a contractible finite space, not just homotopically trivial). However, one can impose  extra conditions on the maps $f_{qp}:K_q\to K_p$ in order to extend Proposition \ref{dbp sc} to $\gamma$-points, and Corollary \ref{index contractible} to a more general class of homotopically trivial posets. To this end we replace homotopy equivalences by \it contractible mappings. \rm  This class of maps was introduced by Cohen in \cite{Co}. A simplicial map $f:K\to L$ is called a contractible mapping if the preimage $f^{-1}(z)$ is contractible for every point $z$ in the underlying space of $L$. 
In \cite[Thm 11.1]{Co} Cohen proved that any contractible mapping $f:K\to L$ is a simple homotopy equivalence. In \cite[Thm 5.1]{Ba2} Barmak exhibited an alternative and simple proof of Cohen's result. From the proof of \cite[Thm 5.1]{Ba2} one can deduce the following.

\begin{proposition}[Barmak] \label{contractiblemapping}
Let $f:K\to L$ be a contractible mapping and let \linebreak $\X(f)^{op}:\X(K)^{op}\to \X(L)^{op}$ be the map induced in the opposite of their face posets. Then $(\X(f)^{op})^{-1}(U_{\sigma})$ is homotopically trivial for every $\sigma \in \X(L)^{op}$.
\end{proposition}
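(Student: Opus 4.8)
The plan is to unravel the combinatorial preimage explicitly, transport the question to a polyhedral one via the naturality of the McCord map $\nu$, and finally invoke the geometric core of Barmak's argument. First I would identify the set in question. Since $\X(f)^{op}$ has the same underlying function as $\X(f)$, and $U_\sigma$ computed in $\X(L)^{op}$ is the downset of cofaces $\{\rho\in\X(L) : \rho\supseteq\sigma\}$, the preimage is the subposet
\[ W := (\X(f)^{op})^{-1}(U_\sigma)=\{\tau\in\X(K) : f(\tau)\supseteq\sigma\}\subseteq \X(K)^{op}, \]
which is an open downset of the finite space $\X(K)^{op}$ because $\X(f)^{op}$ is order preserving.

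Next I would use that $\nu$ is a \emph{natural} weak equivalence (\cite[Thm 11.3.2]{Ba1}) to set up the commutative square $\X(f)^{op}\circ\nu_K=\nu_L\circ f$. Since a point lies in $\nu_L^{-1}(U_\sigma)$ exactly when its carrier contains $\sigma$, one has $\nu_L^{-1}(U_\sigma)=\mathrm{st}(\sigma)$, the open star of $\sigma$ in $L$; the square then gives $\nu_K^{-1}(W)=f^{-1}(\mathrm{st}(\sigma))$. I claim the restriction $\nu_K\colon f^{-1}(\mathrm{st}(\sigma))\to W$ is a weak equivalence, which I would prove by applying McCord's theorem \cite[Thm 6]{Mc} to the basis-like open cover $\{U_\tau\}_{\tau\in W}$ of $W$: for each $\tau\in W$ one has $\nu_K^{-1}(U_\tau)=\mathrm{st}(\tau)$, an open star and hence contractible, while $U_\tau\subseteq\X(K)^{op}$ has maximum $\tau$ and is therefore weakly contractible, so $\nu_K\colon\mathrm{st}(\tau)\to U_\tau$ is a map between weakly contractible spaces and thus a weak equivalence. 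By McCord's theorem the restriction is a weak equivalence, so $W\we f^{-1}(\mathrm{st}(\sigma))$, and it remains only to show that $f^{-1}(\mathrm{st}(\sigma))$ is homotopically trivial.

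This last step is where the hypothesis that $f$ be a contractible mapping is genuinely used, and it is the main obstacle; it is precisely the geometric content one extracts from Barmak's proof of \cite[Thm 5.1]{Ba2}. The idea is that $\mathrm{st}(\sigma)$ linearly deformation retracts onto the barycenter $\hat\sigma$ (the open star is star-shaped with respect to any interior point of $\sigma$), while each fiber $f^{-1}(z)$ is contractible by hypothesis and $f$ is affine on every simplex of $K$, so that over each open simplex of the star the map $f$ behaves like a bundle with contractible fiber. Combining these, one concludes that $f^{-1}(\mathrm{st}(\sigma))$ is weakly contractible, exactly as in \cite[Thm 5.1]{Ba2}; together with $W\we f^{-1}(\mathrm{st}(\sigma))$ this yields the homotopical triviality of $W=(\X(f)^{op})^{-1}(U_\sigma)$.

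The delicate point throughout is that $f$ is \emph{not} a fibration, so the contractibility of $f^{-1}(\mathrm{st}(\sigma))$ cannot be obtained by naively lifting the radial retraction of the open star through $f$; it genuinely requires the cell-like behaviour of contractible mappings guaranteed by Cohen \cite{Co} and reproved by Barmak \cite{Ba2}. Once this is granted, the argument above is essentially formal, and the role of the proposition is to package this local contractibility in exactly the form needed to feed McCord's theorem for the induced poset map.
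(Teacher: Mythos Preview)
The paper does not actually supply a proof of this proposition; it merely records that the statement can be extracted from Barmak's proof of \cite[Thm~5.1]{Ba2}. Your proposal is therefore not competing with a proof in the paper but rather filling in what the paper leaves implicit, and it does so correctly: the identification of $W$ as $\{\tau:f(\tau)\supseteq\sigma\}$, the use of the naturality of $\nu$ to obtain $\nu_K^{-1}(W)=f^{-1}(\mathrm{st}(\sigma))$, and the application of McCord's theorem on the minimal basis $\{U_\tau\}_{\tau\in W}$ are all sound. You are also right that the remaining step---the homotopical triviality of $f^{-1}(\mathrm{st}(\sigma))$---is exactly the geometric input one draws from \cite{Ba2} (and ultimately \cite{Co}), and that it cannot be obtained by a naive lift of the radial retraction. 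In short, your outline is a faithful reconstruction of what the paper is pointing to, with the same division of labour: a formal reduction via $\nu$ and McCord, followed by an appeal to Barmak's argument for the genuinely geometric part.
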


\begin{corollary}\label{dgp sc}
 Let $K:P\rightarrow \mathcal{S}$
be a $P$-diagram of finite simplicial complexes (and simplicial maps). If $p$ is a point of $P$ such that $\hat U_p$ is homotopically trivial and the transition maps $f_{qp}$ are contractible mappings for every $q\leq p$, then $\thocolim K\simeq \thocolim K_{|_{P\smallsetminus \{p\}}}$.
\end{corollary}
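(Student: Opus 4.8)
The plan is to reduce the statement to a purely poset-theoretic question via the face poset functor, and then to prove the corresponding poset-level statement by deleting the points of the relevant fiber one at a time, exactly as in Propositions \ref{dbp} and \ref{up wp}, except that here the deletions are simple equivalences coming from $\gamma$-points rather than collapses coming from weak points. The role of the contractible-mapping hypothesis is to guarantee, through Proposition \ref{contractiblemapping}, that the relevant preimages are homotopically trivial.

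First I would pass to face posets. Consider the diagram $(\X K)^{op}:P\to\P_{<\infty}$ with transition maps $g_{qp}=\X(f_{qp})^{op}$. By Corollary \ref{antithom} there are weak equivalences $\thocolim K\we\hocolim(\X K)^{op}$ and $\thocolim K_{|_{P\smallsetminus\{p\}}}\we\hocolim((\X K)^{op})_{|_{P\smallsetminus\{p\}}}$. Since $\thocolim K$ and $\thocolim K_{|_{P\smallsetminus\{p\}}}$ have the homotopy type of CW-complexes, it suffices to produce a weak equivalence $\hocolim(\X K)^{op}\we\hocolim((\X K)^{op})_{|_{P\smallsetminus\{p\}}}$; the resulting chain of weak equivalences between CW-complexes then upgrades to the desired homotopy equivalence $\simeq$ by Whitehead's theorem.

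Next I would establish the following $\gamma$-point version of Proposition \ref{dbp} at the poset level: if $Y:P\to\P_{<\infty}$ is a diagram, $p\in P$ has $\hat U_p$ homotopically trivial, and $g_{qp}^{-1}(U_x)$ is homotopically trivial for every $q<p$ and every $x\in Y_p$, then $\hocolim Y\we\hocolim Y_{|_{P\smallsetminus\{p\}}}$. By Proposition \ref{contractiblemapping}, the assumption that each $f_{qp}$ is a contractible mapping gives precisely that $g_{qp}^{-1}(U_x)=(\X(f_{qp})^{op})^{-1}(U_x)$ is homotopically trivial, so this applies to $Y=(\X K)^{op}$. To prove it I would take a linear extension $x_0,x_1,\ldots,x_n$ of $Y_p$, set $Y_0=\hocolim Y$ and $Y_{i+1}=Y_i\smallsetminus\{x_i\}$, and delete the points $x_i$ in turn. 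As in the computation in Proposition \ref{dbp}, the elements $x_j<x_i$ of $Y_p$ have already been removed, so $\hat U_{x_i}^{Y_i}=\hocolim\tilde X^i$, where $\tilde X^i:\hat U_p\to\P_{<\infty}$ is given by $\tilde X^i(p')=g_{p'p}^{-1}(U_{x_i})$ with transition maps induced by those of $Y$.

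The crux is to show each $\hat U_{x_i}^{Y_i}$ is homotopically trivial. Here I cannot argue as in Proposition \ref{dbp}, where $\hat U_p$ had a maximum and the preimages were \emph{contractible finite spaces}; under the present hypotheses the values $\tilde X^i(p')$ are only weakly contractible. The remedy is the Homotopy Lemma: the collapse morphism from $\tilde X^i$ to the constant singleton diagram over $\hat U_p$ is a weak equivalence in each degree, so Corollary \ref{homotopy} gives $\hocolim\tilde X^i\we\hat U_p$ (the homotopy colimit of the constant singleton diagram over $\hat U_p$ being $\hat U_p$ itself), which is homotopically trivial by hypothesis. Thus $\hat U_{x_i}^{Y_i}$ is homotopically trivial, i.e. $x_i$ is a down $\gamma$-point of $Y_i$, whence $Y_i\se Y_{i+1}$ by \cite{BM2}; chaining these yields $\hocolim Y\we(\hocolim Y)\smallsetminus Y_p=\hocolim Y_{|_{P\smallsetminus\{p\}}}$. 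Applying this to $Y=(\X K)^{op}$ and combining with the two instances of Corollary \ref{antithom} proves the corollary. I expect the main obstacle to be exactly this passage from ``contractible finite space'' to ``homotopically trivial'': the points being deleted are only $\gamma$-points, not weak points, so the argument must route through the Homotopy Lemma (Corollary \ref{homotopy}) and the $\gamma$-point deletion theorem rather than through honest collapses. This is precisely why Proposition \ref{contractiblemapping} is the correct hypothesis on the transition maps, and why the conclusion is a weak (simple) equivalence rather than a collapse as in Proposition \ref{dbp}.
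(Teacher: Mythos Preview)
Your proposal is correct and follows essentially the same approach as the paper: pass to the diagram $(\X K)^{op}$ via Corollary \ref{antithom}, then run the deletion argument of Proposition \ref{dbp} using Proposition \ref{contractiblemapping} to guarantee the relevant preimages are homotopically trivial. The paper's proof is a two-line sketch (``follow the proof of Proposition \ref{dbp}''), and you have correctly supplied the missing step it glosses over: since $\hat U_p$ no longer has a maximum and the fibers $\tilde X^i(p')$ are only weakly contractible, one must invoke the Homotopy Lemma (Corollary \ref{homotopy}) against the constant singleton diagram to conclude $\hocolim\tilde X^i\we\hat U_p$, so that the deleted points are $\gamma$-points rather than weak points.
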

\begin{proof}
Consider the diagram $(\X K)^{op}:P\to \P_{<\infty}$ and follow the proof of Proposition \ref{dbp}, using that  $(\X(f_{qp})^{op})^{-1}(U_{\sigma})$ are homotopically trivial by Proposition \ref{contractiblemapping}.
\end{proof}

Corollary \ref{dgp sc} in combination with Proposition \ref{up sc} allows us to extend Corollary \ref{index contractible} to a more general class of (homotopically trivial) indexing posets, under the stronger assumption that the transition maps are contractible mappings.

\begin{corollary}
Let $K:P\rightarrow \mathcal{S}$ be a $P$-diagram of finite simplicial complexes. If the indexing poset $P$ can be reduced to a point by removing $\gamma$-points (in particular, if $P$ is collapsible), and the transition maps $f_{qp}$ are contractible mappings, then $\thocolim K\simeq K_p$ (for any $p\in P$).
\end{corollary}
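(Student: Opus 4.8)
The plan is to follow the recursive strategy of Corollary \ref{index contractible}, replacing its appeal to Proposition \ref{dbp sc} (which only handles down beat points) with the $\gamma$-point version supplied by Corollary \ref{dgp sc}. By hypothesis there is a sequence $p_1, p_2, \ldots, p_n$ of points of $P$ such that each $p_i$ is a $\gamma$-point of the subposet $P_{i-1} = P \smallsetminus \{p_1, \ldots, p_{i-1}\}$ and $P_n = \{p\}$ is a single point. I would prove by induction on $i$ that $\thocolim K_{|_{P_{i-1}}} \simeq \thocolim K_{|_{P_i}}$ for each $i$; composing these equivalences and using that the homotopy colimit over a one-point poset is the space itself, namely $\thocolim K_{|_{\{p\}}} = K_p$, then gives $\thocolim K \simeq K_p$.

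For the inductive step I would distinguish the two ways in which $p_i$ can be a $\gamma$-point of $P_{i-1}$. If $\hat F_{p_i}$ is homotopically trivial, Proposition \ref{up sc} applies verbatim and yields $\thocolim K_{|_{P_{i-1}}} \simeq \thocolim K_{|_{P_i}}$, needing no condition on the transition maps. If instead $\hat U_{p_i}$ is homotopically trivial, I would invoke Corollary \ref{dgp sc}, whose hypotheses require in addition that the transition maps $f_{q p_i}$ be contractible mappings for every $q \leq p_i$ in $P_{i-1}$.

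The one point demanding care, and the main technical obstacle, is to confirm that the contractible mapping hypothesis persists along the entire reduction. This is immediate but should be checked: the transition maps of any restricted diagram $K_{|_{P_{i-1}}}$ are exactly those maps $f_{qr}$ of the original diagram with both $q$ and $r$ in $P_{i-1}$, so they remain contractible mappings by the global hypothesis, and in particular the maps $f_{q p_i}$ needed by Corollary \ref{dgp sc} are contractible. Finally, to obtain the conclusion for \emph{any} $p \in P$ rather than just the surviving point, I would note that a poset reducible to a point by removing $\gamma$-points is weakly contractible, hence connected, and that each transition map, being a contractible mapping, is a homotopy equivalence by Cohen's theorem; consequently all the spaces $K_{p'}$ $(p' \in P)$ share a common homotopy type, so $\thocolim K \simeq K_{p'}$ holds for every $p' \in P$.
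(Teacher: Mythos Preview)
Your proposal is correct and follows exactly the approach the paper intends: the paper gives no explicit proof of this corollary, but the sentence immediately preceding it states that Corollary \ref{dgp sc} in combination with Proposition \ref{up sc} extends Corollary \ref{index contractible} to this setting, which is precisely your recursive argument. Your additional care in verifying that the contractible-mapping hypothesis survives restriction, and in justifying the ``for any $p$'' clause via connectedness of $P$ and Cohen's theorem, simply makes explicit what the paper leaves implicit.
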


\begin{example} If the transition maps of the following diagram of simplicial complexes are contractible mappings, its homotopy colimit is homotopy equivalent to any of the $K_p$. This is because the indexing poset is a collapsible (but non-contractible) finite space.

\begin{displaymath}
\xymatrix@C=10pt{ K_1 \ar@{->}[d] \ar@{->}[drr] & & K_2 \ar@{->}[lld] \ar@{->}[rrd] & & K_3 \ar@{->}[lld] \ar@{->}[rrd] & & K_4 \ar@{->}[lld] \ar@{->}[d]  \\
		K_5 \ar@{->}[dr] \ar@{->}[drrr] & & K_6 \ar@{->}[dl] \ar@{->}[dr] & & K_7 \ar@{->}[dl] \ar@{->}[dr] & & K_8 \ar@{->}[dlll] \ar@{->}[dl] \\
		& K_9 & & K_{10} & & K_{11} } 
\end{displaymath}

\end{example}

\bigskip

{\bf Acknowledgements.}
We are grateful to Jonathan Barmak for many useful discussions and suggestions during the preparation of this article. We also would like to thank Volkmar Welker and Tim Porter for useful comments.



\end{document}